\newtheorem{theorem}{Theorem}[section]
\theoremstyle{plain}
\newtheorem{definition}{Definition}[section]
\newtheorem{example}{Example}[section]
\newtheorem{lemma}{Lemma}[section]
\numberwithin{equation}{section} \textheight 22 true cm \textwidth  15 true cm \setlength{\oddsidemargin}{0mm} \setlength{\evensidemargin}{0mm}
\begin{document}
\title[On semi-slant $\xi^\perp-$Riemannian submersions...]{ON SEMI-SLANT $\xi^\perp-$RIEMANNIAN SUBMERSIONS}
\author{Mehmet Akif Akyol}
\address{Bing\"{o}l University, Faculty of Arts and Sciences, Deparment of
Mathematics, 12000, Bing\"{o}l, Turkey}
\email{mehmetakifakyol@bingol.edu.tr}
\author{Ramazan Sar\i}
\address{Amasya University, Merzifon Vocational Schools, 05300, Amasya,
Turkey}
\email{ramazan.sari@amasya.edu.tr}
\subjclass[2010]{53C15, 53C40.}
\keywords{Riemannian submersion, Sasakian manifold, anti-invariant $\xi^\perp-$Riemannian submersion, semi-invariant $%
\xi^\perp-$Riemannian submersion, slant Riemannian submersion.}

\begin{abstract}
The aim of the present paper to define and study semi-slant $\xi^\perp-$Riemannian submersions from Sasakian
manifolds onto Riemannian manifolds as a generalization of anti-invariant $\xi^\perp-$Riemannian submersions,
semi-invariant $\xi^\perp-$Riemannian submersions and slant Riemannian submersions. We
obtain characterizations, investigate the geometry of foliations which arise from the definition of
this new submersion. After we investigate the geometry of foliations, we obtain necessary and sufficient
condition for base manifold to be a locally product manifold and proving new conditions to be totally umbilical
and totally geodesicness, respectively. Moreover, some examples of such submersions are mentioned.
\end{abstract}

\maketitle

\section{Introduction}
A differentiable map $\pi : (M_1, g_1) \longrightarrow (M_2, g_2)$ between Riemannian manifolds $(M_1, g_1)$
and $(M_2, g_2)$ is called a Riemannian submersion if $\pi_*$ is onto and it satisfies
\begin{align}
g_2(\pi_*X_1, \pi_*X_2 )&=g_1(X_1,X_2) \label{1.1}
\end{align}
for $X_1, X_2$ vector fields tangent to $M_1$, where $\pi_*$ denotes the derivative map.
The study of Riemannian submersions were studied by O'Neill \cite{O} and Gray \cite{Gray} see also \cite{FIP}.
Later such submersions according to the conditions on the map $\pi : (M_1, g_1) \longrightarrow (M_2, g_2)$, we have the following submersions:
Riemannian submersions \cite{IMV}, almost Hermitian submersions \cite{W},
invariant submersions (\cite{Chinea, sahin4, sahin5}), anti-invariant submersions (\cite{A1, AF, EM, Gun1, sahin1, sahin4, sahin5}),
lagrangian submersions (\cite{Ta, Ta1}), semi-invariant submersions (\cite{Park1, sahin2}), slant submersions (\cite{EM1, EM2, Gun2, Park4, sahin3}),
semi-slant submersions \cite{A, Gun3, Park2, Park3}, quaternionic submersions \cite{Vilcu, Vilcu1}, hemi-slant submersions (\cite{AY, TSY}),
pointwise slant submersions \cite{LS, SE} etc.
We know that Riemannian submersions have several applications both in mathematics and in physics.
Indeed, Riemannian submersions have their applications in the Kaluza-Klein theory (\cite{BL}, \cite{IV}), supergravity and superstring theories (\cite{IV1}, \cite{M}) and
Yang-Mills theory (\cite{BL1}, \cite{W1}). Recently, in \cite{Lee}, Lee defined anti-invariant $\xi^\perp$-Riemannian
submersions from almost contact metric manifolds and then he studied the
geometry of such maps. Then, in \cite{EM}, Erken and Murathan introduced the
notion of slant submersions from Sasakian manifolds.

On the other hand, as a generalization of anti-invariant $\xi^\perp$-Riemannian submersions,
Akyol et.al in \cite{ASA} defined the notion of semi-invariant $\xi^\perp-$Riemannian submersions
from almost contact metric manifolds and investigate the geometry of such maps.
In this paper, as a generalization of anti-invariant $\xi^\perp-$Riemannian submersions,
semi-invariant $\xi^\perp-$Riemannian submersions and slant Riemannian submersions, we define
semi-slant $\xi^\perp-$Riemannian submersions from Sasakian manifolds onto Riemannian manifolds and investigate
the geometry of the total space and the base space for the existence of such submersions.

The paper is organized as follows. In Sect. 2, we give some basic informations and notions about Riemannian
submersions, the second fundamental form of a map and Sasakian manifolds. In Sect. 3, we define
semi-slant $\xi^\perp-$Riemannian submersions from Sasakian manifolds onto Riemannian manifolds. In Sect. 4, we
investigate the geometry of leaves of the horizontal distribution and the vertical distribution and show that there are
certain product structures on total space of a semi-slant $\xi^\perp-$Riemannian submersion.
In Sect. 5, we find new conditions for a semi-slant $\xi^\perp-$Riemannian submersion to be
totally umbilical and totally geodesicness, respectively. In Sect. 6, we give lots of examples of such submersions.

\section{Riemannian submersions}

Let $({M_1}^{m_1},g_{1})$ and $({M_{2}}^{m_2},g_{2})$ be Riemannian manifolds, where
$\dim(M_1)=m_1,$ $\dim(M_2)=m_2$ and $m_1>m_2.$ A Riemannian submersion $\pi:M_1\longrightarrow
M_2$ is a map of $M_1$ onto $M_2$ satisfying the following axioms:

(i) $\pi$ has maximal rank, and

(ii)The differential $\pi_{\ast}$ preserves the lenghts of horizontal
vectors, that is $\pi_*$ is a linear isometry.

The geometry of Riemannian submersion is characterized by  O'Neill's two $(1,2)$ tensor $\mathcal{T}$ and $\mathcal{A}$ defined as follows:
\begin{equation}
\mathcal{T}(E_1,E_2)=\mathcal{H}\nabla^{^{M_1}}_{\mathcal{V}E_1}\mathcal{V}E_2+\mathcal{V}%
\nabla^{^{M_1}}_{\mathcal{V}E_1}\mathcal{H}E_2  \label{T}
\end{equation}
and
\begin{equation}
\mathcal{A}(E_1,E_2)=\mathcal{H}\nabla^{^{M_1}}_{\mathcal{H}E_1}\mathcal{V}E_2
+\mathcal{V}\nabla^{^{M_1}}_{\mathcal{H}E_1}\mathcal{H}E_2  \label{A}
\end{equation}
for any $E_1, E_2\in\Gamma(M_1),$ where $\nabla^{^{M_1}}$ is the Levi-Civita connection on $g_1.$
Note that we denote the projection morphisms on the vertical distribution and the horizontal
distribution by $\mathcal{V}$ and $\mathcal{H}$, respectively. One can easily see that $\mathcal{T}$ is vertical, $\mathcal{T}_{E_1}=\mathcal{T}_{\mathcal{V}E_1}$ and
$\mathcal{A}$ is horizontal, $\mathcal{A}_{E_1}=\mathcal{A}_{\mathcal{H}E_1}.$ We also note that
$$\mathcal{T}_UV=\mathcal{T}_VU\,\, \textrm{and}\,\,\mathcal{A}_XY=-\mathcal{A}_YX=\frac{1}{2}\mathcal{V}[X,Y], $$
for $X,Y\in \Gamma((ker\pi_*)^{\bot })$ and $U,V\in\Gamma(ker\pi _*).$

On the other hand, from ({\ref{T}}) and ({\ref{A}}), we obtain
\begin{equation}
\nabla^{^M}_{V}W=\mathcal{T}_{V}W+\hat{\nabla}_{V}W;  \label{nvw}
\end{equation}
\begin{equation}
\nabla^{^M}_{V}X=\mathcal{T}_{V}X+\mathcal{H}(\nabla^{^M}_{V}X);  \label{nvx}
\end{equation}
\begin{equation}
\nabla^{^M}_{X}V=\mathcal{V}(\nabla^{^M}_{X}V)+\mathcal{A}_{X}V;  \label{nxv}
\end{equation}
\begin{equation}
\nabla^{^M}_{X}Y=\mathcal{A}_{X}Y+\mathcal{H}(\nabla^{^M}_{X}Y),  \label{nxy}
\end{equation}
for any $X, Y\in\Gamma((ker\pi_{\ast})^{\bot})$ and $V, W\in\Gamma(ker\pi_{\ast}).$ Moreover, if $X$ is basic then $\mathcal{H}(\nabla^{^M}_{V}X)=\mathcal{A}_{X}V.$ It is easy to see that for $U,V\in\Gamma(ker\pi_*),$ $\mathcal{T}_UV$
coincides with the fibers as the second fundamental form and $\mathcal{A}_XY$ reflecting the complete
integrability of the horizontal distribution.\newline
For each $p\in M_2$, $\pi^{-1}(p)$ is an $(m_1-m_2)$ dimensional submanifold of
$M_1$ called a fiber. A vector field on $M_1$ is called \textit{vertical} (resp. hozirontal) if it is always tangent to fibres.
A vector field on $M_1$ is called horizontal if it is always orthogonal to fibres. A
vector field $Z$ on $M_1$ is called basic if $Z$ is horizontal and $\pi-$related
to a vector field $\bar{Z}$ on $M_2$, i.e., $\pi_{\ast}Z_{p}=\bar{Z}_{\pi_{\ast}(p)}$ for
all $p\in M_1$.

\begin{lemma}
(see \cite{FIP}, \cite{O}). Let $\pi : M_1\longrightarrow M_2$ be a Riemannian
submersion. If $X$ and $Y$ basic vector fields on $M_1,$ then we get:

\begin{enumerate}
\item [(i)] $g_1(X,Y)=g_2(\bar{X},\bar{Y})\circ\pi,$

\item [(ii)]$\mathcal{H}[X,Y]$ is a basic and $\pi_*\mathcal{H}[X,Y]= [\bar{X},\bar{Y}]\circ\pi;$

\item [(iii)] $\mathcal{H}(\nabla^{^{M_1}}_{X}Y)$ is a basic, $\pi-$%
related to $(\nabla^{^{M_2}}_{\bar{X}}\bar{Y}),$ where $\nabla^{^{M_1}}$ and
$\nabla^{^{M_2}}$ are the Levi-Civita connection on $M_1$ and $M_2;$

\item [(iv)] $[X,V]\in\Gamma(ker\pi_*)$ is vertical, for any $V\in\Gamma(ker\pi_*).$
\end{enumerate}
\end{lemma}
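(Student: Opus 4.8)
The plan is to dispatch the four items in order, leaning on the defining isometry \eqref{1.1} and on the standard fact that Lie brackets of $\pi$-related vector fields are again $\pi$-related; only item (iii) requires a genuine computation, which I would carry out with the Koszul formula.

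Item (i) is immediate: since $X,Y$ are basic, $\pi_{*}X_{p}=\bar X_{\pi(p)}$ and $\pi_{*}Y_{p}=\bar Y_{\pi(p)}$, and substituting into \eqref{1.1} gives $g_{1}(X,Y)_{p}=g_{2}(\bar X,\bar Y)_{\pi(p)}$, which is the asserted identity. For (iv), a vertical field $V$ is $\pi$-related to the zero vector field on $M_{2}$ (because $\pi_{*}V\equiv 0$), while $X$ is $\pi$-related to $\bar X$; hence $[X,V]$ is $\pi$-related to $[\bar X,0]=0$, i.e. $\pi_{*}[X,V]=0$, which is exactly the verticality of $[X,V]$. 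For (ii), since $X$ and $Y$ are $\pi$-related to $\bar X$ and $\bar Y$, the bracket $[X,Y]$ is $\pi$-related to $[\bar X,\bar Y]$, so $\pi_{*}[X,Y]_{p}=[\bar X,\bar Y]_{\pi(p)}$; as $\pi_{*}$ annihilates the vertical part, this reads $\pi_{*}\mathcal H[X,Y]=[\bar X,\bar Y]\circ\pi$, and since $\pi_{*}$ restricts to a linear isomorphism of each horizontal space onto the corresponding tangent space of $M_{2}$, the value $\mathcal H[X,Y]_{p}$ is determined by $[\bar X,\bar Y]_{\pi(p)}$ alone, so $\mathcal H[X,Y]$ is basic and $\pi$-related to $[\bar X,\bar Y]$.

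For (iii) I would take an arbitrary basic field $Z$ with projection $\bar Z$ and expand $2g_{1}(\nabla^{M_{1}}_{X}Y,Z)$ by the Koszul formula. Every inner product occurring there is between basic fields, hence by (i) the pullback under $\pi$ of the analogous quantity on $M_{2}$; every derivative is taken along a basic field, so it transforms according to $E(f\circ\pi)=(\bar E f)\circ\pi$; and every Lie bracket is paired against a horizontal field, so only its horizontal part matters, and that part is, by (ii), basic and $\pi$-related to the corresponding bracket on $M_{2}$. Collecting the terms yields $2g_{1}(\nabla^{M_{1}}_{X}Y,Z)=\bigl(2g_{2}(\nabla^{M_{2}}_{\bar X}\bar Y,\bar Z)\bigr)\circ\pi$. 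Letting $N$ be the basic field on $M_{1}$ that is $\pi$-related to $\nabla^{M_{2}}_{\bar X}\bar Y$, this says $g_{1}(\nabla^{M_{1}}_{X}Y,Z)=g_{1}(N,Z)$ for every basic $Z$; since basic fields span the horizontal distribution pointwise and both $\mathcal H(\nabla^{M_{1}}_{X}Y)$ and $N$ are horizontal, we conclude $\mathcal H(\nabla^{M_{1}}_{X}Y)=N$, which is basic and $\pi$-related to $\nabla^{M_{2}}_{\bar X}\bar Y$.

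The only delicate point is the bracket bookkeeping in (iii): one must verify that each Lie bracket appearing in the Koszul expansion is contracted against a horizontal vector, so that replacing it by its horizontal part (which is controlled by (ii)) is legitimate and the vertical components play no role. Once that observation is in place, every step of the argument reduces either to \eqref{1.1} or to the naturality of the Lie bracket under $\pi$-related fields.
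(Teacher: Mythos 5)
Your argument is correct and is essentially the standard proof of this lemma from the cited sources (\cite{O}, \cite{FIP}), which the paper itself does not reproduce: items (i), (ii) and (iv) follow from the defining isometry and the naturality of the Lie bracket under $\pi$-related fields, and item (iii) is the usual Koszul-formula computation. The one delicate point you flag---that each bracket in the Koszul expansion is paired against a horizontal field, so only its horizontal (hence basic) part contributes---is handled properly, so there is no gap.
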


Let $(M_1,g_{1})$ and $(M_2,g_{2})$ be Riemannian manifolds and $\pi:M_1\longrightarrow M_2$ is a differentiable map.
Then the second fundamental form of $\pi$ is given by
\begin{equation}
(\nabla\pi_*)(X,Y)=\nabla^{^\pi}_{X}\pi_*Y-\pi_*(\nabla_{X}Y)  \label{npixy}
\end{equation}
for $X,Y\in\Gamma(TM_1),$ where $\nabla^{^{\pi}}$ is the pull back connection and $\nabla$ the
Levi-Civita connections of the metrics $g_{1}$ and $g_{2}.$

Finally, let $(M_1,g_1)$ be a $(2m + 1)-$dimensional Riemannian manifold and $TM_1$ denote the tangent bundle of $M_1.$
Then $M_1$ is called an almost contact metric manifold if there exists a tensor $\varphi$ of type $(1,1)$ and global vector
field $\xi$ and $\eta$ is a $1-$form of $\xi$, then we have
\begin{equation}
\varphi^{2}=-I+\eta\otimes\xi,\ \ \ \eta(\xi)=1  \label{s1}
\end{equation}
\begin{equation}
\varphi\xi=0,\ \ \eta o\varphi=0\ \ \ \text{and} \ \ g_{M}(\varphi X,\varphi
Y)=g_{M}(X,Y)-\eta(X)\eta(Y),  \label{s2}
\end{equation}
where $X, Y$ are any vector fields on $M_1.$ In this case, $(\varphi,\xi,\eta,g_1)$ is called the almost contact metric structure of $M_1.$
The almost contact metric manifold $(M_1,\varphi,\xi,\eta,g_1)$ is called a contact metric manifold if
$$\Phi(X,Y)=d\eta(X,Y)$$
for any $X, Y\in\Gamma(TM_1),$ where $\Phi$ is a $2-$form in $M_1$ defined by $\Phi(X,Y)=g_1(X,\varphi Y).$ The $2-$form $\Phi$ is called the
fundamental $2-$form of $M_1.$ A contact metric structure of $M_1$ is said to be normal if
$$[\varphi,\varphi]+2d\eta\otimes\xi=0,$$
where $[\varphi,\varphi]$ is Nijenhuis tensor of $\varphi$. Any normal contact metric manifold is called a Sasakian manifold. Moreover, if $M_1$ is Sasakian \cite{B, SH}, then we have
\begin{equation}
(\nabla^{^{M_1}}_{X}\varphi)Y=g_{M}(X,Y)\xi-\eta(Y)X \ \ \text{and} \ \ \nabla^{^{M_1}}_{X}\xi=-\varphi X,  \label{s3}
\end{equation}
where $\nabla^{^{M_1}}$ is the connection of Levi-Civita covariant differentiation.

\section{Semi-slant $\xi^\perp-$Riemannian submersions}

\begin{definition}\label{def1}
\label{def} Let $(M_1,\varphi,\xi,\eta,g_1)$ be a Sasakian manifold and $%
(M_2,g_{2})$ be a Riemannian manifold. Suppose that there exists a Riemannian
submersion $\pi:M_1 \longrightarrow M_2$ such that $\xi$ is normal to $kerF_*$.
Then $\pi:M_1 \longrightarrow M_2$ is called semi-slant $\xi^\perp-$Riemannian submersion if there
is a distribution $D_{1}\subseteq \ker\pi_{*}$ such that
\begin{equation}
ker\pi_{*}=D_{1}\oplus D_{2},\, \, \, \varphi(D_{1})=D_{1},\label{s4}
\end{equation}
and the angle $\theta=\theta(U)$ between $\varphi U$ and the space $(D_2)_p$ is constant for nonzero
$U\in(D_2)_p$ and $p\in M$, where $D_2$ is the orthogonal complement of $D_1$ in $ker\pi_*$.
As it is, the angle $\theta$ is called the semi-slant angle of the submersion.
\end{definition}

Now, let $\pi$ be a semi-slant $\xi^\perp-$Riemannian submersion from a Sasakian manifold $%
(M_1,\varphi,\xi,\eta,g_{1})$ onto a Riemannian manifold $(M_2,g_{2})$. Then, for $U\in\Gamma(\ker\pi_{*})$, we put
\begin{equation}
U=\mathcal{P}U+\mathcal{Q}U  \label{u}
\end{equation}
where $\mathcal{P}U\in\Gamma(D_1)$ and $\mathcal{Q}U\in\Gamma(D_2).$ For $Z\in\Gamma(TM),$ we have
\begin{equation}
Z=\mathcal{V}Z+\mathcal{H}Z \label{z}
\end{equation}
where $\mathcal{V}Z\in\Gamma(ker\pi_*)$ and $\mathcal{H}Z\in\Gamma((ker\pi_*)^\perp).$ For $V\in\Gamma(ker\pi_*),$ we get
\begin{equation}
\varphi V=\phi V+\omega V  \label{jv}
\end{equation}
where $\phi V$ and $\omega V$ are vertical (resp. horizontal) components of $\varphi V,$ respectively.
Similarly, for any $X\in\Gamma((ker \pi_*)^\perp),$ we have
\begin{equation}
\varphi X=\mathcal{B}X+\mathcal{C}X  \label{jx}
\end{equation}
where $\mathcal{B}X$ (resp. $\mathcal{C}X$) is the vertical part (resp. horizontal part) of $\varphi X.$
Then, the horizontal distribution $(ker\pi_*)^\perp$ is decomposed as
\begin{equation}
(\ker\pi_*)^\perp=\omega D_2\oplus \mu, \label{s5}
\end{equation}
here $\mu$ is the orthogonal complementary distribution of $\omega D_2$ and it is both invariant distribution of $((ker\pi_*)^\perp)$
with respect to $\varphi$ and contains $\xi.$ By (\ref{s2}), (\ref{jv}) and (\ref{jx}), we have
\begin{align}
g_1(\phi U_1, V_1)=-g_1(U_1,\phi V_1) \label{phiu}
\end{align}
and
\begin{align}
g_1(\omega U_1, X)=-g_1(U_1,\mathcal{B}X) \label{omegau}
\end{align}
for $U_1,V_1\in\Gamma(\ker\pi_{*})$ and $X\in\Gamma((\ker\pi_{*})^\perp).$ From (\ref{jv}), (\ref{jx}) and (\ref{s5}), we get:
\begin{lemma}
Let $\pi$ be a semi-slant $\xi^\perp-$Riemannian submersion from a Sasakian manifold $(M_1,\varphi,\xi,\eta,g_1)$ onto a Riemannian manifold
$(M_2,g_{2})$. Then we obtain
\begin{align*}
&(a)\ \ \phi D_1=D_1,\,\,\,\,\,\, (b)\ \ \omega D_1=0,\\
&(c)\ \ \phi D_2\subset D_2,\,\,\,\,\,\, (d)\ \ \mathcal{B}(ker\pi_*)^\perp=D_2,\\
&(e)\ \ \mathcal{T}_{U_1}\xi=\phi U_1,\,\,\,\,(f)\ \ \hat{\nabla}_{U_1}\xi=-\omega U_1,
\end{align*}
for $U_1\in\Gamma(\ker\pi_{*})$ and $\xi\in\Gamma((ker\pi_*)^\perp).$
\end{lemma}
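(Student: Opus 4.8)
The plan is to obtain all six statements by purely algebraic manipulation, using only the contact identities (\ref{s1})--(\ref{s3}), the tensor fields $\phi,\omega,\mathcal{B},\mathcal{C}$ introduced in (\ref{jv})--(\ref{jx}), the bundle splitting (\ref{s5}), and the O'Neill relation (\ref{nvx}); no curvature or integrability input is required. Throughout I will use that $\xi$ is horizontal, so that $\eta$ vanishes on $\ker\pi_*$ and hence $\varphi^2=-I$ on $\ker\pi_*$ by (\ref{s1}), together with the skew-symmetry of $\varphi$ with respect to $g_1$ coming from (\ref{s2}).

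For (a) and (b): if $V\in\Gamma(D_1)$ then $\varphi V\in\Gamma(D_1)\subseteq\Gamma(\ker\pi_*)$ is vertical, so comparison with (\ref{jv}) forces $\omega V=0$ and $\phi V=\varphi V$; this gives $\omega D_1=0$, and since $\varphi$ is (up to sign) an involution on $D_1$ it restricts to a bijection of $D_1$, whence $\phi D_1=\varphi D_1=D_1$. For (c): given $V\in\Gamma(D_2)$ and $W\in\Gamma(D_1)$, the horizontality of $\omega V$ gives $g_1(\phi V,W)=g_1(\varphi V,W)=-g_1(V,\varphi W)$ by skew-symmetry, and $\varphi W\in\Gamma(D_1)$ by part (a); since $D_2\perp D_1$ this vanishes, so $\phi V\in\Gamma(D_2)$.

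For (d): decompose $X\in\Gamma((\ker\pi_*)^\perp)$ using (\ref{s5}). On $\mu$ the map $\mathcal{B}$ vanishes because $\mu$ is $\varphi$-invariant. For $V\in\Gamma(D_2)$, apply $\varphi$ to $\varphi V=\phi V+\omega V$ and use $\varphi^2V=-V$, part (c), and (\ref{jv})--(\ref{jx}); reading off the vertical part gives $\mathcal{B}\omega V=-V-\phi^2 V\in\Gamma(D_2)$. Hence $\mathcal{B}((\ker\pi_*)^\perp)\subseteq D_2$. For the opposite inclusion I would argue by orthogonality: if $Z\in\Gamma(D_2)$ is orthogonal to $\mathcal{B}((\ker\pi_*)^\perp)$, then by (\ref{omegau}) we get $g_1(\omega Z,X)=-g_1(Z,\mathcal{B}X)=0$ for all horizontal $X$, so $\omega Z=0$; but then $\varphi Z=\phi Z$ is vertical, which for $Z\in\Gamma(D_2)$ is impossible unless $Z=0$ (this is precisely where the semi-slant angle being nonzero enters). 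Therefore $\mathcal{B}((\ker\pi_*)^\perp)=D_2$.

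Finally, (e) and (f) follow by substituting the Sasakian identity $\nabla^{M_1}_{U_1}\xi=-\varphi U_1$ from (\ref{s3}) into (\ref{nvx}) with $U_1\in\Gamma(\ker\pi_*)$ and $\xi\in\Gamma((\ker\pi_*)^\perp)$, and then separating vertical and horizontal parts via $\varphi U_1=\phi U_1+\omega U_1$: the vertical component yields the formula for $\mathcal{T}_{U_1}\xi$ and the horizontal component yields the one for $\hat{\nabla}_{U_1}\xi$. The only genuinely non-formal step is the equality (not mere inclusion) asserted in (d): the computation $\mathcal{B}\omega=-(I+\phi^2)$ on $D_2$ is immediate, but deducing that this operator is onto $D_2$ — equivalently that $\omega$ is injective on $D_2$, which would fail if $\theta=0$ — is the point one must handle with care; every other identity is a one-line symbol chase.
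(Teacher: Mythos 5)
Your argument is correct and is exactly the symbol chase the paper has in mind: the paper states this lemma with no proof at all (``From (\ref{jv}), (\ref{jx}) and (\ref{s5}), we get:''), so the routine verifications in (a)--(c) and the forward inclusion in (d) are precisely what the authors intend. Two remarks. First, you are right that the equality in (d) is the only non-formal point: your identity $\mathcal{B}\omega=-(I+\phi^{2})=-\sin^{2}\theta\,I$ on $D_{2}$ gives surjectivity only when $\sin\theta\neq 0$, and Definition \ref{def1} does not formally exclude $\theta=0$ with $D_{2}\neq\{0\}$; the paper silently assumes the proper case, and your flagging of this is a genuine (if minor) improvement on the source. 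Second, when you carry out your own step (e)--(f), the paper's convention $\nabla^{^{M_1}}_{X}\xi=-\varphi X$ in (\ref{s3}) combined with (\ref{nvx}) gives $\mathcal{T}_{U_1}\xi=-\phi U_1$ and $\mathcal{H}\nabla^{^{M_1}}_{U_1}\xi=-\omega U_1$, so your method reproduces (f) (with $\hat{\nabla}_{U_1}\xi$ read as the horizontal part, since $\xi$ is horizontal) but produces the opposite sign to the stated (e); that sign appears to be a typo in the paper rather than a defect in your derivation, but be aware your computation will not literally match the displayed formula.
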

On the other hand, using (\ref{jv}), (\ref{jx}) and the fact that $\varphi^2=-I+\eta\otimes\xi,$ we obtain:
\begin{lemma}
Let $\pi$ be a semi-slant $\xi^\perp-$Riemannian submersion from a Sasakian manifold $(M_1,\varphi,\xi,\eta,g_1)$ onto a Riemannian manifold
$(M_2,g_{2})$. Then we get
\begin{align*}
&(i)\ \ \phi^2+\mathcal{B}\omega=-id,\,\,\,\,\,\,\, (ii)\ \ \mathcal{C}^2+\omega\mathcal{B}=-id,\\
&(iii)\ \ \omega\phi+\mathcal{C}\omega=0,\,\,\,\,\,\,\, (iv)\ \ \mathcal{B}\mathcal{C}+\phi\mathcal{B}=0,
\end{align*}
where $I$ is the identity operator on the space of $\pi.$
\end{lemma}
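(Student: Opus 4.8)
The plan is to compute $\varphi^2$ on vertical and on horizontal vector fields in two ways --- once by iterating the splittings \eqref{jv} and \eqref{jx}, and once via the Sasakian identity $\varphi^2=-\,id+\eta\otimes\xi$ from \eqref{s1} --- and then to read off the four relations by comparing vertical and horizontal components, using that the decomposition $TM_1=\ker\pi_*\oplus(\ker\pi_*)^\perp$ is direct.

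First I would take $U_1\in\Gamma(\ker\pi_*)$. Applying \eqref{jv} gives $\varphi^2U_1=\varphi(\phi U_1)+\varphi(\omega U_1)$; since $\phi U_1\in\Gamma(\ker\pi_*)$, a second use of \eqref{jv} yields $\varphi(\phi U_1)=\phi^2U_1+\omega\phi U_1$, and since $\omega U_1\in\Gamma((\ker\pi_*)^\perp)$, \eqref{jx} yields $\varphi(\omega U_1)=\mathcal{B}\omega U_1+\mathcal{C}\omega U_1$. Hence
\[
\varphi^2U_1=\bigl(\phi^2U_1+\mathcal{B}\omega U_1\bigr)+\bigl(\omega\phi U_1+\mathcal{C}\omega U_1\bigr),
\]
the first parenthesis being vertical and the second horizontal. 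On the other hand $\xi\in\Gamma((\ker\pi_*)^\perp)$ by Definition \ref{def1}, so $\eta(U_1)=g_1(U_1,\xi)=0$ and \eqref{s1} gives $\varphi^2U_1=-U_1$, which is purely vertical. Comparing vertical parts gives $(i)$ and comparing horizontal parts gives $(iii)$.

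Next I would repeat the argument with $X\in\Gamma((\ker\pi_*)^\perp)$: from \eqref{jx}, $\varphi^2X=\varphi(\mathcal{B}X)+\varphi(\mathcal{C}X)$, and expanding $\varphi(\mathcal{B}X)$ by \eqref{jv} (since $\mathcal{B}X$ is vertical) and $\varphi(\mathcal{C}X)$ by \eqref{jx} (since $\mathcal{C}X$ is horizontal) gives
\[
\varphi^2X=\bigl(\phi\mathcal{B}X+\mathcal{B}\mathcal{C}X\bigr)+\bigl(\omega\mathcal{B}X+\mathcal{C}^2X\bigr),
\]
again split into its vertical and horizontal pieces. Since $\varphi^2X=-X+\eta(X)\xi$ is entirely horizontal, the vertical part on the left vanishes, yielding $(iv)$ at once, while equating horizontal parts yields $\mathcal{C}^2X+\omega\mathcal{B}X=-X+\eta(X)\xi$.

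The step I expect to be the only real subtlety is the $\xi$-direction in $(ii)$: because $\varphi\xi=0$ the operator $\varphi$ is not invertible along $\mathbb{R}\xi\subset\mu$, so the clean relation $\mathcal{C}^2+\omega\mathcal{B}=-\,id$ should be understood on the orthogonal complement of $\xi$ in $(\ker\pi_*)^\perp$, i.e. on $\omega D_2\oplus(\mu\ominus\mathbb{R}\xi)$, where $\eta$ vanishes; on all of $(\ker\pi_*)^\perp$ the precise statement carries the extra term $\eta\otimes\xi$. Once this bookkeeping is settled the remaining verifications are routine component comparisons, so I do not anticipate any genuine difficulty.
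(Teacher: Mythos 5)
Your proposal is correct and is exactly the computation the paper intends: the lemma is stated there with only the remark that it follows from (\ref{jv}), (\ref{jx}) and $\varphi^2=-I+\eta\otimes\xi$, and your vertical/horizontal component comparison is that omitted verification. Your added observation that on all of $(\ker\pi_*)^\perp$ the identity $(ii)$ really reads $\mathcal{C}^2X+\omega\mathcal{B}X=-X+\eta(X)\xi$ (since $\xi$ is horizontal and $\varphi\xi=0$) is a correct and worthwhile refinement of the statement as printed.
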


Let $(M_1,\varphi,\xi,\eta,g_1)$ be a Sasakian manifold and $(M_2,g_2)$ be a Riemannian manifold.
Let $\pi:(M_1,\varphi,\xi,\eta,g_1)\longrightarrow (M_2,g_2)$
be a semi-slant $\xi^\perp-$Riemannian submersion. We now examine how the Sasakian
structure on $M_1$ effects the tensor fields $\mathcal{T}$ and $\mathcal{A}$ of a semi-slant $\xi^\perp-$Riemannian
submersion $\pi:(M_1,\varphi,\xi,\eta,g_1)\longrightarrow (M_2,g_2)$.

\begin{lemma}
Let $(M_1,\varphi,\xi,\eta,g_1)$ be a Sasakian manifold and $(M_2,g_2)$ a Riemannian manifold.
Let $\pi:(M_1,\varphi,\xi,\eta,g_1)\longrightarrow (M_2,g_2)$ be a semi-slant $\xi^\perp-$Riemannian submersion. Then we have
\begin{align}
\mathcal{B}\mathcal{T}_{U}V+\phi\hat{\nabla}_{U}V&=\hat{\nabla}_{U}\phi V+\mathcal{T}_{U}\omega V,  \label{s6}\\
g_1(U,V)\xi+\mathcal{C}\mathcal{T}_{U}V+\omega\hat{\nabla}_{U}V&=\mathcal{T}_{U}\phi V+\mathcal{H}\nabla^{^{M_1}}_{U}\omega V,  \label{s7}
\end{align}
\begin{align}
\phi \mathcal{T}_{U}X+\mathcal{B}\nabla^{^{M_1}}_{U}X-\eta(X)U&=\hat{\nabla}_{U}\mathcal{B}X+\mathcal{T}_{U}\mathcal{C}X, \label{s8}\\
\omega \mathcal{T}_{U}X+\mathcal{C}\nabla^{^{M_1}}_{U}X&=\mathcal{T}_{U}\mathcal{B}X+\mathcal{H}\nabla^{^{M_1}}_{U}\mathcal{C}X, \label{s9}
\end{align}
\begin{align}
g_1(X,Y)\xi-\omega \mathcal{A}_{X}Y+\mathcal{C}\mathcal{H}\nabla^{^{M_1}}_{X}Y&=\mathcal{A}_{X}\mathcal{B}Y+\nabla^{^{M_1}}_{X}\mathcal{C}Y+\eta(Y)X, \label{s10}\\
\phi \mathcal{A}_{X}Y+\mathcal{B}\mathcal{H}\nabla^{^{M_1}}_{X}Y&=\mathcal{V}\nabla^{^{M_1}}_{X}\mathcal{B}Y+\mathcal{A}_{X}\mathcal{C}Y,  \label{s11}
\end{align}
for all $X,Y\in\Gamma((\ker\pi_{*})^\perp)$ and $U,V\in\Gamma(\ker\pi_{*})$.
\end{lemma}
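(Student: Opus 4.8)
The plan is to deduce all six identities from a single device, applied three times. By the covariant-constancy formula (\ref{s3}) of the Sasakian structure, for any vector fields $E,F$ on $M_1$ one has
\[
\nabla^{^{M_1}}_{E}(\varphi F)=\varphi\bigl(\nabla^{^{M_1}}_{E}F\bigr)+g_1(E,F)\xi-\eta(F)E .
\]
I would run this identity for the three types of argument pairs — $(E,F)=(U,V)$ with $U,V\in\Gamma(\ker\pi_*)$; $(E,F)=(U,X)$ with $U\in\Gamma(\ker\pi_*)$ and $X\in\Gamma((\ker\pi_*)^{\perp})$; and $(E,F)=(X,Y)$ with $X,Y\in\Gamma((\ker\pi_*)^{\perp})$ — in each case rewriting $\varphi F$ by means of (\ref{jv}) (when $F$ is vertical) or (\ref{jx}) (when $F$ is horizontal), expanding every covariant derivative on the two sides through the O'Neill relations (\ref{nvw})--(\ref{nxy}), and finally projecting onto the vertical and the horizontal distributions. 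The three cases will produce the three displayed pairs of formulas.

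In \emph{Case 1} the Reeb term $\eta(V)E$ drops out, because $\xi\in\Gamma((\ker\pi_*)^{\perp})$ forces $\eta(V)=0$; applying (\ref{nvw}) to $\nabla^{^{M_1}}_{U}\phi V$ and to $\nabla^{^{M_1}}_{U}V$, (\ref{nvx}) to $\nabla^{^{M_1}}_{U}\omega V$, and then splitting $\varphi\mathcal{T}_{U}V$ and $\varphi\hat\nabla_{U}V$ with (\ref{jx}) and (\ref{jv}), one reads (\ref{s6}) off the vertical part and (\ref{s7}) off the horizontal part. In \emph{Case 2} instead $g_1(U,X)=0$, leaving $-\eta(X)U$; expanding $\nabla^{^{M_1}}_{U}\mathcal{B}X$ by (\ref{nvw}), $\nabla^{^{M_1}}_{U}\mathcal{C}X$ and $\nabla^{^{M_1}}_{U}X$ by (\ref{nvx}), and decomposing $\varphi\mathcal{T}_{U}X$, $\varphi\,\mathcal{H}\nabla^{^{M_1}}_{U}X$ as before, the vertical part gives (\ref{s8}) and the horizontal part gives (\ref{s9}). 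In \emph{Case 3} both contact terms $g_1(X,Y)\xi$ and $\eta(Y)X$ survive and lie in the horizontal distribution; here $\nabla^{^{M_1}}_{X}\mathcal{B}Y$ is treated by (\ref{nxv}), $\nabla^{^{M_1}}_{X}\mathcal{C}Y$ and $\nabla^{^{M_1}}_{X}Y$ by (\ref{nxy}), and $\varphi\mathcal{A}_{X}Y$, $\varphi\,\mathcal{H}\nabla^{^{M_1}}_{X}Y$ by (\ref{jv}) and (\ref{jx}); the vertical part is (\ref{s11}) and the horizontal part is (\ref{s10}) (in the latter $\nabla^{^{M_1}}_{X}\mathcal{C}Y$ enters through its horizontal component, the remaining terms of that line being horizontal).

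The calculation is elementary and I expect no genuine obstacle; the only things that require care are the bookkeeping and the contact terms. For the bookkeeping one must keep straight that $\varphi$ sends a vertical vector to a $\phi$-part plus an $\omega$-part and a horizontal vector to a $\mathcal{B}$-part plus a $\mathcal{C}$-part, while $\mathcal{T}_{U}$ and $\mathcal{A}_{X}$ each exchange the vertical and horizontal type of the vector they act on; getting these right is precisely what separates (\ref{s6}) from (\ref{s7}), (\ref{s8}) from (\ref{s9}), and (\ref{s10}) from (\ref{s11}). For the contact terms the point is that $\eta$ annihilates vertical vectors and $g_1$ annihilates the pairing of a vertical with a horizontal vector, so the $\xi$- and $\eta$-contributions appear only in Case 3 (and in the $g_1(U,V)\xi$ term of Case 1, which sits in the horizontal slot). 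No part of the argument needs the relations of the preceding lemma; (\ref{s3}) together with the splittings of Section 2 suffices.
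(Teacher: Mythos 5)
Your proposal is correct and is essentially the paper's own argument: the authors likewise start from the Sasakian identity (\ref{s3}) written as $g_1(U,V)\xi-\eta(V)U=\nabla^{^{M_1}}_{U}\phi V+\nabla^{^{M_1}}_{U}\omega V-\varphi\nabla^{^{M_1}}_{U}V$, expand via (\ref{nvw})--(\ref{nxy}) and (\ref{jv})--(\ref{jx}), and read off (\ref{s6})--(\ref{s7}) from the vertical and horizontal parts, with the remaining two cases "obtained in a similar method." Your bookkeeping of which terms land in which slot matches theirs, so no further comment is needed.
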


\begin{proof}
Given $U,V\in\Gamma(ker\pi_*)$, by virtue of (\ref{s3}) and (\ref{jv}), we have
$$g_1(U,V)\xi-\eta(V)U=\nabla^{^{M_1}}_{U}\phi V+\nabla^{^{M_1}}_{U}\omega V-\varphi \nabla^{^{M_1}}_{U}V.$$
By using (\ref{nvw}), (\ref{nvx}), (\ref{jv}) and (\ref{jx}), we get
\begin{align}
g_1(U,V)\xi&=\mathcal{T}_{U}\phi V+\hat{\nabla}_{U}\phi V+\mathcal{T}_{U}\omega V+\mathcal{H}\nabla^{^{M_1}}_{U}\omega V \notag\\
&-\mathcal{B}\mathcal{T}_{U}V-\mathcal{C}\mathcal{T}_{U}V-\phi \hat{\nabla}_{U}V-\omega \hat{\nabla}_{U}V. \label{s12}
\end{align}
In (\ref{s12}), comparing horizontal and vertical parts, we get (\ref{s6}) and (\ref{s7}). The other assertions can
be obtained in a similar method.

\end{proof}

As the proof of the following theorem is similar to semi-slant submanifolds (Theorem 5.1 of \cite{CCFF}), we omit it.

\begin{theorem}\label{teo1}
Let $\pi:(M_1,\varphi,\xi,\eta,g_1)\longrightarrow (M_2,g_2)$ be a semi-slant $\xi^\perp-$Riemannian submersion
from a Sasakian manifold $(M_1,\varphi,\xi,\eta,g_1)$ onto a Riemannian manifold $(M_2,g_2).$ Then we have
\begin{equation}
\phi^{2}W=-\cos^2{\theta}W,\,\,\ W\in\Gamma(D_2),\label{phiv}
\end{equation}
where $\theta$ denotes the semi-slant angle of $D_2$.
\end{theorem}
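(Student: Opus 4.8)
The plan is to imitate the classical slant-submanifold computation (Theorem~5.1 of \cite{CCFF}), transported to the submersion framework. First I would fix $p\in M$ and a nonzero $W\in(D_2)_p$. Since $\xi$ is horizontal while $W$ is vertical, $\eta(W)=g_1(W,\xi)=0$, so (\ref{s2}) gives $\|\varphi W\|^2=g_1(W,W)-\eta(W)^2=\|W\|^2$. Next I would write $\varphi W=\phi W+\omega W$ as in (\ref{jv}); since $\phi D_2\subset D_2$ (recall the Lemma above) and $\omega W$ is horizontal, $\phi W$ is precisely the orthogonal projection of $\varphi W$ onto $(D_2)_p$. Hence the semi-slant angle of Definition~\ref{def1} is computed as
\[
\cos\theta=\frac{g_1(\varphi W,\phi W)}{\|\varphi W\|\,\|\phi W\|}=\frac{g_1(\phi W,\phi W)}{\|W\|\,\|\phi W\|}=\frac{\|\phi W\|}{\|W\|},
\]
using $g_1(\omega W,\phi W)=0$ and the norm identity just established. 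If $\theta=\pi/2$ then $\phi W=0$ for every such $W$ and (\ref{phiv}) is trivial, so from now on I assume $\cos\theta\neq 0$, whence $\phi W\neq 0$ whenever $W\neq 0$.

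The key step is to feed this formula back into itself. Since $\phi$ maps $D_2$ into $D_2$, I may apply the displayed identity with $W$ replaced by $\phi W\in D_2$; because $\theta$ is the \emph{same} constant for every nonzero vector of $D_2$, this gives $\|\phi^2 W\|=\cos\theta\,\|\phi W\|=\cos^2\theta\,\|W\|$. On the other hand, the skew-symmetry relation (\ref{phiu}) applied to $\phi W,\,W\in\Gamma(\ker\pi_*)$ yields $g_1(\phi^2 W,W)=-g_1(\phi W,\phi W)=-\|\phi W\|^2=-\cos^2\theta\,\|W\|^2$. Comparing these two facts, the Cauchy--Schwarz inequality $|g_1(\phi^2 W,W)|\le\|\phi^2 W\|\,\|W\|$ is an equality, so $\phi^2 W$ must be a scalar multiple of $W$, say $\phi^2 W=\lambda W$; substituting back into $g_1(\phi^2 W,W)=-\cos^2\theta\|W\|^2$ forces $\lambda=-\cos^2\theta$, which is exactly (\ref{phiv}).

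An equivalent ending, closer in spirit to \cite{CCFF}, is to note that $\phi|_{D_2}$ is skew-symmetric by (\ref{phiu}) together with $\phi D_2\subset D_2$, so $\phi^2|_{D_2}$ is a self-adjoint endomorphism of $D_2$; the identity $g_1\big((\phi^2+\cos^2\theta\,\mathrm{id})W,W\big)=0$ for all $W\in D_2$ then forces $\phi^2+\cos^2\theta\,\mathrm{id}=0$ on $D_2$ after polarization. I expect the only genuinely delicate point to be the bookkeeping in the first paragraph, namely checking that $\phi W$ really is the orthogonal projection of $\varphi W$ onto $(D_2)_p$ so that the angle occurring in the formula is the one named in Definition~\ref{def1}; this rests on the decompositions (\ref{s4}), (\ref{jv})--(\ref{jx}) and on $\phi D_2\subset D_2$. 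Once that is in place the rest is routine.
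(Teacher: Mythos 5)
Your argument is correct and is essentially the standard slant-angle computation of Theorem 5.1 of \cite{CCFF}, which is precisely the proof the paper invokes and omits: one establishes $\cos\theta=\|\phi W\|/\|W\|$ (using $\eta(W)=0$ and the fact that $\phi W$ is the orthogonal projection of $\varphi W$ onto $D_2$, which needs $\phi D_2\subset D_2$), combines it with the skew-symmetry identity (\ref{phiu}) to get $g_1(\phi^2W,W)=-\cos^2\theta\,g_1(W,W)$, and then upgrades this to the operator identity. Both of your endings for that last step (equality in Cauchy--Schwarz, or polarization of the self-adjoint operator $\phi^2+\cos^2\theta\,\mathrm{id}$ on $D_2$) are valid, the latter being the one used in the cited reference.
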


By using (\ref{s2}), (\ref{jv}), (\ref{phiu}), (\ref{omegau}) we get:

\begin{lemma}
Let $\pi:(M_1,\varphi,\xi,\eta,g_1)\longrightarrow (M_2,g_2)$ be a semi-slant $\xi^\perp-$Riemannian submersion
from a Sasakian manifold $(M_1,\varphi,\xi,\eta,g_1)$ onto a Riemannian manifold $(M_2,g_2)$ with a semi-slant angle $\theta.$
Then we have
\begin{align}
g_1(\phi W_1,\phi W_2)&=\cos^{2}\theta g_1(W_1,W_2),\label{s15}\\
g_1(\omega W_1,\omega W_2)&=\sin^{2}\theta g_1(W_1,W_2), \label{s16}
\end{align}
for any $W_1, W_2\in\Gamma(D_2).$
\end{lemma}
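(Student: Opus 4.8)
The plan is to obtain both identities as purely algebraic consequences of the splitting $\varphi W=\phi W+\omega W$ from \eqref{jv}, the relation $\phi^{2}W=-\cos^{2}\theta\,W$ of Theorem~\ref{teo1}, and the metric compatibility \eqref{s2} of $\varphi$ with $g_{1}$; no analytic input (connections, O'Neill tensors) is needed.

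First I would prove \eqref{s15}. Note that for $W_{2}\in\Gamma(D_{2})\subset\Gamma(\ker\pi_{*})$ the vector $\phi W_{2}$ is vertical by definition \eqref{jv}, so the skew-symmetry relation \eqref{phiu}, applied with $U_{1}=W_{1}$ and $V_{1}=\phi W_{2}$, gives $g_{1}(\phi W_{1},\phi W_{2})=-g_{1}(W_{1},\phi^{2}W_{2})$. Substituting $\phi^{2}W_{2}=-\cos^{2}\theta\,W_{2}$ from Theorem~\ref{teo1} yields $g_{1}(\phi W_{1},\phi W_{2})=\cos^{2}\theta\,g_{1}(W_{1},W_{2})$, which is \eqref{s15}.

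Next I would compute $g_{1}(\varphi W_{1},\varphi W_{2})$ in two ways. On one hand, \eqref{s2} gives $g_{1}(\varphi W_{1},\varphi W_{2})=g_{1}(W_{1},W_{2})-\eta(W_{1})\eta(W_{2})$, and since $W_{1},W_{2}$ are vertical while $\xi$ is horizontal we have $\eta(W_{i})=g_{1}(W_{i},\xi)=0$, so $g_{1}(\varphi W_{1},\varphi W_{2})=g_{1}(W_{1},W_{2})$. On the other hand, $\varphi W_{i}=\phi W_{i}+\omega W_{i}$ with $\phi W_{i}$ vertical and $\omega W_{i}$ horizontal, hence mutually orthogonal, so $g_{1}(\varphi W_{1},\varphi W_{2})=g_{1}(\phi W_{1},\phi W_{2})+g_{1}(\omega W_{1},\omega W_{2})$. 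Equating the two expressions and using \eqref{s15} gives $g_{1}(\omega W_{1},\omega W_{2})=g_{1}(W_{1},W_{2})-\cos^{2}\theta\,g_{1}(W_{1},W_{2})=\sin^{2}\theta\,g_{1}(W_{1},W_{2})$, which is \eqref{s16}.

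There is no real obstacle; the computation is routine. The only steps worth stating explicitly are the vanishing of $\eta(W_{i})$, which is precisely where the hypothesis that $\xi$ be normal to $\ker\pi_{*}$ (the ``$\xi^{\perp}$'' condition) is used, and the orthogonality of the vertical part $\phi W_{i}$ and the horizontal part $\omega W_{i}$ of $\varphi W_{i}$, which is immediate from \eqref{jv}. (Alternatively, \eqref{s16} could be derived by applying the skew-symmetry relation \eqref{omegau} and Lemma~\ref{teo1}, but passing through \eqref{s2} is shorter.)
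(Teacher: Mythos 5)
Your proof is correct and follows essentially the route the paper intends: the paper states this lemma without a written proof, merely citing \eqref{s2}, \eqref{jv}, \eqref{phiu} and \eqref{omegau}, and your argument is exactly the standard computation those references point to (skew-symmetry of $\phi$ plus $\phi^{2}W=-\cos^{2}\theta\,W$ for the first identity, then the orthogonal splitting $\varphi W=\phi W+\omega W$ together with $\eta(W_i)=0$ for the second). Your explicit remarks on where the $\xi^{\perp}$ hypothesis enters and on the orthogonality of the vertical and horizontal parts are exactly the details the paper leaves implicit.
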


\section{Integrability, Totally Geodesicness and Decomposition Theorems}

In this section, we shall study the integrability and totally geodesicness of the distributions
which are involved in the definition of a semi-slant $\xi^\perp-$Riemannian submersions and obtain decomposition theorems
of such submersions.

\begin{theorem}
Let $\pi$ be a semi slant $\xi^\perp-$Riemannian submersion from a Sasakian manifold $(M_1,\varphi,\xi,\eta,g_1)$
onto a Riemannian manifold $(M_2,g_2)$ with a semi-slant angle $\theta.$ Then
\begin{enumerate}
\item [(i)] $D_{1}$ is integrable $\Leftrightarrow$
$\begin{aligned}[t]
(\nabla \pi _{\ast })(U,\varphi V)-(\nabla \pi _{\ast })(V,\varphi U)\notin\Gamma (\pi _{\ast}\mu)
\end{aligned}$
\item [(ii)] $D_{2}$ is integrable $\Leftrightarrow$
$\begin{aligned}[t]
g_{2}(\pi _{\ast }\omega W,(\nabla \pi _{\ast })(Z,\varphi U))+g_{2}(\pi _{\ast
}\omega Z,(\nabla \pi _{\ast })(W,\varphi U))&=g_{1}(\phi W,\hat{\nabla}_{Z}\varphi U)\\
&+g_{1}(\phi Z,\hat{\nabla}_{W}\varphi U)
\end{aligned}$
\end{enumerate}
for $U,V\in \Gamma (D_{1})$ and $Z,W\in \Gamma (D_{2}).$
\end{theorem}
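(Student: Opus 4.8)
\emph{Plan.} The strategy for both parts is the same: since $\ker\pi_*$ is integrable, a bracket of vertical fields is again vertical, so integrability of $D_1$ (resp. $D_2$) is equivalent to the vanishing of the $D_2$-component (resp. the $D_1$-component) of the relevant bracket; I then rewrite that component by combining the Sasakian identities (\ref{s3}), the decomposition (\ref{jv}), the O'Neill equations (\ref{nvw})--(\ref{nxy}), and the definition (\ref{npixy}) of $(\nabla\pi_*)$. Two facts are used repeatedly: $\eta$ vanishes on $\ker\pi_*$ because $\xi\in\Gamma((\ker\pi_*)^\perp)$, and $\pi_*$ restricts to a linear isometry of $(\ker\pi_*)^\perp$ onto $TM_2$.

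For (i): $D_1$ is integrable iff, for all $U,V\in\Gamma(D_1)$, the vertical field $[U,V]$ lies in $D_1$, i.e. iff $\mathcal{Q}[U,V]=0$; since $\omega$ is injective on $D_2$ by (\ref{s16}), this is equivalent to $\omega[U,V]=0$, hence, $\pi_*$ being injective on horizontal vectors, to $\pi_*\omega[U,V]=0$. Now $\omega[U,V]=\mathcal{H}(\varphi[U,V])$, and writing $\nabla^{M_1}_A(\varphi B)=(\nabla^{M_1}_A\varphi)B+\varphi\nabla^{M_1}_A B$ and using (\ref{s3}) with $\eta(U)=\eta(V)=0$ gives $\varphi[U,V]=\nabla^{M_1}_U\varphi V-\nabla^{M_1}_V\varphi U$ (the $g_1(U,V)\xi$-terms cancel by symmetry of $g_1$). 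Applying $\pi_*$, using $\pi_*(\varphi U)=\pi_*(\varphi V)=0$ since $\varphi D_1=D_1\subset\ker\pi_*$, and invoking (\ref{npixy}), we obtain $\pi_*\omega[U,V]=(\nabla\pi_*)(V,\varphi U)-(\nabla\pi_*)(U,\varphi V)$. As this vector lies in $\Gamma(\pi_*(\omega D_2))$, which by (\ref{s5}) meets $\Gamma(\pi_*\mu)$ only in $0$, its vanishing is the same as $(\nabla\pi_*)(U,\varphi V)-(\nabla\pi_*)(V,\varphi U)\in\Gamma(\pi_*\mu)$; this is the claim.

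For (ii): $D_2$ is integrable iff $g_1([Z,W],U)=0$ for all $Z,W\in\Gamma(D_2)$ and $U\in\Gamma(D_1)$. By (\ref{s2}) and $\eta(U)=\eta([Z,W])=0$ this equals $g_1(\varphi[Z,W],\varphi U)$, and as in (i), $\varphi[Z,W]=\nabla^{M_1}_Z\varphi W-\nabla^{M_1}_W\varphi Z$. I expand $\varphi W=\phi W+\omega W$ and $\varphi Z=\phi Z+\omega Z$ via (\ref{jv}); since $g_1(\varphi W,\varphi U)=g_1(W,U)=0$ by orthogonality of $D_1$ and $D_2$, metric compatibility gives $g_1(\nabla^{M_1}_Z\varphi W,\varphi U)=-g_1(\varphi W,\nabla^{M_1}_Z\varphi U)$. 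Splitting $\nabla^{M_1}_Z\varphi U=\mathcal{T}_Z\varphi U+\hat{\nabla}_Z\varphi U$ by (\ref{nvw}) and pairing vertical with vertical and horizontal with horizontal yields a vertical contribution $-g_1(\phi W,\hat{\nabla}_Z\varphi U)$ and a horizontal contribution $-g_1(\omega W,\mathcal{T}_Z\varphi U)$; the latter I convert, using the isometry $\pi_*$ on horizontal vectors and $\pi_*\mathcal{T}_Z\varphi U=\pi_*\mathcal{H}\nabla^{M_1}_Z\varphi U=-(\nabla\pi_*)(Z,\varphi U)$ (from (\ref{npixy}) and $\pi_*\varphi U=0$), into $g_2(\pi_*\omega W,(\nabla\pi_*)(Z,\varphi U))$. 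Performing the same computation for $g_1(\nabla^{M_1}_W\varphi Z,\varphi U)$ and subtracting expresses $g_1([Z,W],U)$ as a combination of the four terms appearing in the theorem; setting it equal to zero gives the stated equivalence.

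None of these manipulations is hard in isolation. The points that require care are: in (ii), keeping track of which pieces are vertical and which are horizontal --- and of the signs this produces --- when moving covariant derivatives off $\varphi W,\varphi Z$ onto $\varphi U$ and when converting $\mathcal{T}$ on $M_1$ into $(\nabla\pi_*)$ on $M_2$; and in (i), the two structural observations that ``$\mathcal{Q}[U,V]=0$'' is equivalent to ``$\omega[U,V]=0$'' (which uses injectivity of $\omega|_{D_2}$, i.e. $\theta\neq 0$) and that vanishing of an element of $\pi_*(\omega D_2)$ is the same as its lying in $\pi_*\mu$. I expect the sign/component bookkeeping in (ii) to be the main source of potential error.
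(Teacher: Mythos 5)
Your argument for (i) is correct and lands on the same identity as the paper, but by a slightly different route: the paper pairs $[U,V]$ with an arbitrary $Z\in\Gamma(D_2)$, uses $\phi^2Z=-\cos^2\theta\,Z$ to generate a $\cos^2\theta$ term, and solves to get $(\sin^{2}\theta)\,g_{1}([U,V],Z)=-g_{2}((\nabla \pi _{\ast })(U,\varphi V)-(\nabla \pi _{\ast })(V,\varphi U),\pi _{\ast }\omega Z)$, whereas you compute $\pi_*\omega[U,V]$ directly and invoke injectivity of $\omega|_{D_2}$. The two are equivalent, and both silently require $\theta\neq 0$ (the paper divides by $\sin^2\theta$); you at least say so explicitly. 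Note that what both computations actually establish is that $D_1$ is integrable if and only if $(\nabla \pi _{\ast })(U,\varphi V)-(\nabla \pi _{\ast })(V,\varphi U)\in\Gamma(\pi_*\mu)$; the ``$\notin$'' in the printed statement contradicts the paper's own final display and is evidently a misprint, so your ``$\in$'' is the correct reading.

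For (ii) the paper supplies no proof (``the other assertion can be obtained in a similar method''), and your key identity $g_1(\nabla^{M_1}_Z W,U)=-g_1(\phi W,\hat{\nabla}_Z\varphi U)+g_2(\pi_*\omega W,(\nabla\pi_*)(Z,\varphi U))$ is exactly the one the paper uses later for the parallelism of $D_2$, so your method is sound. But your last step overclaims: antisymmetrizing via $g_1([Z,W],U)=g_1(\nabla^{M_1}_ZW,U)-g_1(\nabla^{M_1}_WZ,U)$ yields the condition $g_2(\pi_*\omega W,(\nabla\pi_*)(Z,\varphi U))-g_2(\pi_*\omega Z,(\nabla\pi_*)(W,\varphi U))=g_1(\phi W,\hat{\nabla}_Z\varphi U)-g_1(\phi Z,\hat{\nabla}_W\varphi U)$, with minus signs, whereas the printed statement is symmetric in $Z$ and $W$ (all plus signs). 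These are genuinely different conditions --- one asserts $g_1(\nabla^{M_1}_ZW-\nabla^{M_1}_WZ,U)=0$, the other $g_1(\nabla^{M_1}_ZW+\nabla^{M_1}_WZ,U)=0$ --- so ``setting it equal to zero gives the stated equivalence'' is not literally true. Your derivation correctly proves the antisymmetrized version; you should either state that version or explain why the symmetric one would be equivalent (it is not in general). This is almost certainly another misprint in the paper, but your write-up should flag the discrepancy rather than assert agreement.
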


\begin{proof}
Given $U, V\in\Gamma(D_{1})$ and $X\in\Gamma((ker\pi_*)^\perp)$, since $[U,V]\in\Gamma(ker\pi_*)$, we have $g_1([U,V],X)=0.$ Thus $D_1$ is integrable
$\Leftrightarrow$ $g_1([U,V],Z)=0$ for $Z\in\Gamma(D_2).$ Since $M_1$ is a Sasakian manifold, by (\ref{s2}) and (\ref{s3}), we have
\begin{align}
g_{1}(\nabla^{^{M_1}}_{U}V,Z)&=g_{1}(\nabla^{^{M_1}}_{U}\varphi V-g_{1}(U,V)\xi -\eta(V)U,\varphi Z)\notag\\
&=g_{1}(\nabla^{^{M_1}}_{U}\varphi V,\varphi Z).\label{4s1}
\end{align}%
Using (\ref{jv}) in (\ref{4s1}), we get
\begin{align*}
g_{1}([U,V],Z)&=-g_{1}(\nabla^{^{M_1}}_{U}V,\varphi \phi Z)+g_{1}(\mathcal{H}\nabla^{^{M_1}}_{U}\varphi
V,wZ)-g_{1}(\nabla^{^{M_1}}_{V}U,\varphi \phi Z)-g_{1}(\mathcal{H}\nabla^{^{M_1}}_{V}\varphi U,wZ).
\end{align*}
Then, by (\ref{phiv}) and (\ref{npixy}), we conclude that
\begin{align*}
g_{1}([U,V],Z)&=\cos ^{2}\theta g_{1}(\nabla^{^{M_1}}_{U}V,Z)-g_{2}((\nabla \pi _{\ast
})(U,\varphi V)+\nabla _{U}^{\pi }\pi _{\ast }\varphi V,\pi _{\ast }wZ) \\
&-\cos ^{2}\theta g_{1}(\nabla^{^{M_1}}_{V}U,Z)+g_{2}((\nabla \pi _{\ast
})(V,\varphi U)+\nabla _{V}^{\pi }\pi _{\ast }\varphi U,\pi _{\ast }wZ).
\end{align*}
After some calculations, we obtain
\begin{align*}
(\sin^{2}\theta)g_{1}([U,V],Z)&=-g_{2}((\nabla \pi _{\ast })(U,\varphi
V)-(\nabla \pi _{\ast })(V,\varphi U),\pi _{\ast }wZ),
\end{align*}
which proves (i). The other assertion can be obtained in a similar method.
\end{proof}

We now investigate the geometry of leaves of $D_1$ and $D_2.$

\begin{theorem}\label{d1par}
Let $\pi$ be a semi slant $\xi^\perp-$Riemannian submersion from a Sasakian manifold $(M_1,\varphi,\xi,\eta,g_1)$
onto a Riemannian manifold $(M_2,g_2)$ with a semi-slant angle $\theta.$ Then the distribution $D_{1}$ is parallel if and only if%
\begin{align}
g_{2}((\nabla \pi_{\ast })(U,\varphi V),\pi _{\ast }\omega Z)&=g_{1}(\mathcal{T}_{U}\omega\phi Z,V) \label{4s2}
\end{align}
and%
\begin{align}
-g_{2}((\nabla \pi_{\ast })(U,\varphi V),\pi _{\ast }\mathcal{C}X)&=g_{1}(V,\hat{\nabla}_{U}\phi\mathcal{B}X+\mathcal{T}_{U}\omega\mathcal{B}X)
+g_{1}(V,\varphi U)\eta(X)\label{4s3}
\end{align}
for $U, V\in \Gamma (D_{1}), Z\in \Gamma (D_{2})$ and $X\in\Gamma((\ker \pi _{\ast})^{\bot })$.
\end{theorem}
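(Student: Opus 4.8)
The plan is to characterize parallelism of $D_1$ by testing the covariant derivative $\nabla^{M_1}_U V$ against a basis of the orthogonal complement of $D_1$, namely against $D_2$ and against $(\ker\pi_*)^\perp$ (the $\xi$-direction being subsumed in the latter). So I would show: $D_1$ is parallel $\Leftrightarrow$ $g_1(\nabla^{M_1}_U V, Z)=0$ and $g_1(\nabla^{M_1}_U V, X)=0$ for all $U,V\in\Gamma(D_1)$, $Z\in\Gamma(D_2)$, $X\in\Gamma((\ker\pi_*)^\perp)$; equation (\ref{4s2}) will come from the first condition and (\ref{4s3}) from the second.

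For the $D_2$-component: starting from $g_1(\nabla^{M_1}_U V, Z)$, I would use the Sasakian identity (\ref{s3}) to write $\nabla^{M_1}_U V = \varphi\nabla^{M_1}_U\varphi V - g_1(U,V)\xi + \eta(V)U$ (modulo sign bookkeeping as in the proof of the previous theorem); since $U,V\in D_1$ the $\xi$ and $\eta(V)U$ terms are orthogonal to $Z\in D_2\subset\ker\pi_*$ or vanish, leaving $g_1(\nabla^{M_1}_U V, Z) = -g_1(\nabla^{M_1}_U\varphi V, \varphi Z)$. Decomposing $\varphi Z = \phi Z + \omega Z$ via (\ref{jv}), the $\phi Z$ part feeds back a $\cos^2\theta$ factor (using Theorem \ref{teo1}, exactly as in the previous proof), and after collecting the $\sin^2\theta$ I would push the remaining horizontal pairing through $\pi_*$ using (\ref{npixy}): $g_1(\mathcal{H}\nabla^{M_1}_U\varphi V,\omega Z) = -g_2((\nabla\pi_*)(U,\varphi V),\pi_*\omega Z)$, since $\pi_*\varphi V$ is $\pi$-related appropriately and $\nabla^\pi_U\pi_*\varphi V$ contributes a term that matches the right side. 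The term $g_1(\mathcal{T}_U\omega\phi Z, V)$ on the right of (\ref{4s2}) arises by applying $\varphi$ to $\phi Z$ again (so $\varphi\phi Z$ has component $\omega\phi Z$ horizontally) and using (\ref{nvx}) together with $g_1(\mathcal{T}_U\omega\phi Z,V)=-g_1(\omega\phi Z,\mathcal{T}_U V)$ type manipulations.

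For the $(\ker\pi_*)^\perp$-component: I would start from $g_1(\nabla^{M_1}_U V, X)$, again substitute the Sasakian formula to get $-g_1(\nabla^{M_1}_U\varphi V,\varphi X)$, then decompose $\varphi X = \mathcal{B}X + \mathcal{C}X$ via (\ref{jx}). The $\mathcal{B}X\in D_2$ part and $\mathcal{C}X\in\mu$ part are handled separately: for $\mathcal{C}X$, since $\mathcal{C}X$ is horizontal I convert to a $\pi_*$-pairing $-g_2((\nabla\pi_*)(U,\varphi V),\pi_*\mathcal{C}X)$; for $\mathcal{B}X$, which is vertical, I use (\ref{nvw}) and split $\varphi(\mathcal{B}X) = \phi\mathcal{B}X + \omega\mathcal{B}X$ — the first giving $g_1(V,\hat\nabla_U\phi\mathcal{B}X)$ and the second $g_1(V,\mathcal{T}_U\omega\mathcal{B}X)$ after moving $\mathcal{T}$ across. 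The $\eta(X)$ term is where I must be careful: since $\xi\in(\ker\pi_*)^\perp$, when I apply (\ref{s3}) I pick up $-\eta(\varphi V)U$-type corrections and, from the second part of (\ref{s3}) ($\nabla^{M_1}_X\xi = -\varphi X$), a contribution proportional to $\eta(X)$; tracking this produces the summand $g_1(V,\varphi U)\eta(X)$.

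The main obstacle I expect is bookkeeping the $\xi$- and $\eta$-terms cleanly: because $\xi$ lives in the horizontal distribution here (the $\xi^\perp$ condition), every application of the Sasakian covariant-derivative identity (\ref{s3}) injects $\eta$-terms that must be carefully separated from the genuinely horizontal $\mu$-terms, and it is easy to lose or double-count the $g_1(V,\varphi U)\eta(X)$ contribution. A secondary technical point is making sure the $\sin^2\theta$ normalization (dividing through after the $\cos^2\theta$ feedback from Theorem \ref{teo1}) is applied consistently in both (\ref{4s2}) and (\ref{4s3}) — though since the displayed equations have no explicit $\sin^2\theta$, presumably it has been divided out, so I would double-check that the $D_1$-parallel condition $g_1(\nabla^{M_1}_U V,\cdot)=0$ is genuinely equivalent to the stated equations and not merely implied. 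Beyond that, the computation is a routine but lengthy application of (\ref{nvw})--(\ref{nxy}), (\ref{npixy}), the lemmas of Section 3, and Theorem \ref{teo1}, paralleling the structure of the preceding integrability proof.
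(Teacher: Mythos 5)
Your plan coincides with the paper's proof: both parts test $\nabla^{M_1}_U V$ against $D_2$ and against $(\ker\pi_*)^\perp$, apply the Sasakian identity (\ref{s3}), decompose via $\phi,\omega,\mathcal{B},\mathcal{C}$, invoke Theorem \ref{teo1} for the $\cos^2\theta$ feedback, divide by $\sin^2\theta$, and convert the horizontal pairings through $(\nabla\pi_*)$ exactly as the paper does. The only correction to your sketch is the sign of the intermediate identity: since $\eta(V)=\eta(Z)=0$ here, one gets $g_1(\nabla^{M_1}_U V,Z)=+\,g_1(\nabla^{M_1}_U\varphi V,\varphi Z)$ as in (\ref{4s1}), not $-\,g_1(\nabla^{M_1}_U\varphi V,\varphi Z)$, which is precisely the bookkeeping you flagged.
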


\begin{proof}
By virtue of (\ref{4s1}), (\ref{jv}) and (\ref{nvw}), we have
\begin{align*}
g_{1}(\nabla^{^{M_1}}_{U}V,Z)&=-g_{1}(\nabla^{^{M_1}}_{U}V,\phi ^{2}Z)-g_{1}(\nabla^{^{M_1}}_{U}V,\omega \phi Z)
+g_{1}(\mathcal{H}\nabla^{^{M_1}}_{U}\varphi V,\omega Z)
\end{align*}
for $U, V\in \Gamma (D_{1})$ and $Z\in \Gamma (D_{2}).$ Since $\pi$ is a semi-slant $\xi^\perp-$Riemannian submersion, using (\ref{nvw}) and (\ref{npixy}), we arrive
\begin{align*}
g_{1}(\nabla^{^{M_1}}_{U}V,Z)&=\cos ^{2}\theta g_{1}(\nabla^{^{M_1}}_{U}V,Z)-g_{1}(\mathcal{T}_{U}V,w\phi Z)+g_{2}((\nabla
\pi _{\ast })(U,\varphi V),\pi _{\ast }(wZ))
\end{align*}%
or
\begin{align*}
\sin ^{2}\theta g_{1}(\nabla^{^{M_1}}_{U}V,Z)=-g_{1}(\mathcal{T}_{U}w\phi Z,V)+g_{2}((\nabla
\pi _{\ast })(U,\varphi V),\pi _{\ast }(wZ)),
\end{align*}
which gives (\ref{4s2}). On the other hand, from (\ref{s2}) and (\ref{s3}), we get
\begin{align*}
g_{1}(\nabla^{^{M_1}}_{U}V,X)&=g_{1}(\nabla^{^{M_1}}_{U}\varphi V,\varphi X)+g_{1}(V,\varphi U)\eta (X).
\end{align*}
for $U,V\in \Gamma (D_{1})$ and $X\in \Gamma ((\ker \pi_{\ast })^{\bot }).$ Then using (\ref{jv}) and (\ref{jx}), we conlude that
\begin{align*}
g_{1}(\nabla^{^{M_1}}_{U}V,X)&=g_{1}(V,\nabla^{^{M_1}}_{U}\phi \mathcal{B}X)+g_{1}(V,\nabla^{^{M_1}}_{U}\omega \mathcal{B}X)
+g_{1}(\mathcal{C}X,\mathcal{H}\nabla^{^{M_1}}_{U}\varphi V)+g_{1}(V,\varphi U)\eta (X).
\end{align*}
Also, using (\ref{nvw}), (\ref{nvx}) and the character of $\pi$, we get
\begin{align*}
g_{1}(\nabla^{^{M_1}}_{U}V,X)&=g_{1}(V,\mathcal{T}_{U}\phi \mathcal{B}X+\hat{\nabla}_{U}\phi
\mathcal{B}X)+g_{1}(V,\mathcal{T}_{U}\omega\mathcal{B}X+\mathcal{H}\nabla^{^{M_1}}_{U}\omega\mathcal{B}X) \\
&-g_{2}(\pi_{\ast}(\mathcal{C}X),\pi_{\ast}(\mathcal{H}\nabla^{^{M_1}}_{U}\varphi V))+g_{1}(V,\varphi U)\eta(X).
\end{align*}
Then (\ref{npixy}) imply
\begin{align*}
g_{1}(\nabla^{^{M_1}}_{U}V,X)&=g_{1}(V,\hat{\nabla}_{U}\phi
\mathcal{B}X)+g_{1}(V,\mathcal{T}_{U}\omega\mathcal{B}X)+g_{2}((\nabla\pi _{\ast})(U,\varphi V),\pi_{\ast}\mathcal{C}X)\\
&+g_{1}(V,\varphi U)\eta(X),
\end{align*}
which gives (\ref{4s3}).
\end{proof}

For $D_{2}$, we get:

\begin{theorem}\label{d2par}
Let $\pi$ be a semi slant $\xi^\perp-$Riemannian submersion from a Sasakian manifold $(M_1,\varphi,\xi,\eta,g_1)$
onto a Riemannian manifold $(M_2,g_2)$ with a semi-slant angle $\theta.$ Then the distribution $D_{2}$ is parallel if and only if%
\begin{align}
g_{2}(\pi _{\ast }\omega W,(\nabla \pi_{\ast})(Z,\varphi U))&=g_{1}(\phi W,\hat{\nabla}_{Z}\varphi U) \label{4s4}
\end{align}
and
\begin{align}
g_{2}((\nabla \pi _{\ast })(Z,\omega W),\pi _{\ast}(X))-g_{2}((\nabla\pi _{\ast
})(Z,\omega \phi W),\pi _{\ast}(X))&=g_{1}(\mathcal{T}_{Z}\omega W,\mathcal{B}X)+g_{1}(W,\varphi Z)\eta (X) \label{4s5}
\end{align}
for any $Z,W\in \Gamma (D_{2}), U\in\Gamma(D_1)$ and $X\in\Gamma((ker\pi_*)^\perp).$
\end{theorem}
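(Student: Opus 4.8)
The plan is to read ``$D_{2}$ is parallel'' as $\nabla^{M_1}_Z W\in\Gamma(D_2)$ for all $Z,W\in\Gamma(D_2)$, and then, via the orthogonal splitting $TM_1=D_1\oplus D_2\oplus(\ker\pi_*)^\perp$, to break this into the two scalar conditions $g_1(\nabla^{M_1}_Z W,U)=0$ for all $U\in\Gamma(D_1)$ and $g_1(\nabla^{M_1}_Z W,X)=0$ for all $X\in\Gamma((\ker\pi_*)^\perp)$. I will identify the first with \eqref{4s4} and the second with \eqref{4s5}.

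For the $D_1$-component I would begin from $g_1(\nabla^{M_1}_Z W,U)=g_1(\varphi\nabla^{M_1}_Z W,\varphi U)$, which holds by \eqref{s2} since $\eta(U)=0$; then \eqref{s3} gives $\varphi\nabla^{M_1}_Z W=\nabla^{M_1}_Z\varphi W-g_1(Z,W)\xi$ and $g_1(\xi,\varphi U)=0$, so $g_1(\nabla^{M_1}_Z W,U)=g_1(\nabla^{M_1}_Z\varphi W,\varphi U)$. Substituting $\varphi W=\phi W+\omega W$ from \eqref{jv} and expanding by \eqref{nvw} and \eqref{nvx}, the $\mathcal T$-term carried by $\phi W$ is horizontal and drops against the vertical vector $\varphi U$, leaving $g_1(\hat\nabla_Z\phi W,\varphi U)+g_1(\mathcal T_Z\omega W,\varphi U)$. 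Because $\phi W\in\Gamma(D_2)$, $\varphi U\in\Gamma(D_1)$ and $D_1\perp D_2$, metric compatibility of $\hat\nabla$ turns the first summand into $-g_1(\phi W,\hat\nabla_Z\varphi U)$; for the second, skew-symmetry of $\mathcal T$ gives $g_1(\mathcal T_Z\omega W,\varphi U)=-g_1(\omega W,\mathcal T_Z\varphi U)$, and since $\pi_*\varphi U=0$, \eqref{npixy} yields $(\nabla\pi_*)(Z,\varphi U)=-\pi_*\mathcal T_Z\varphi U$, so this summand becomes $g_2(\pi_*\omega W,(\nabla\pi_*)(Z,\varphi U))$. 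Setting the sum to zero is exactly \eqref{4s4}.

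For the $(\ker\pi_*)^\perp$-component I would run the same machinery but also invoke Theorem~\ref{teo1} ($\phi^2 W=-\cos^2\theta\,W$) and the operator identities $\phi^2+\mathcal B\omega=-\mathrm{id}$, $\omega\phi+\mathcal C\omega=0$, which on $D_2$ read $\mathcal B\omega W=-\sin^2\theta\,W$ and $\mathcal C\omega W=-\omega\phi W$; equivalently $\sin^2\theta\,W=-\varphi(\omega W)-\omega\phi W$. Together with $\eta(\nabla^{M_1}_Z W)=g_1(W,\varphi Z)$ (from $\nabla^{M_1}_Z\xi=-\varphi Z$), this reduces $\sin^2\theta\,g_1(\nabla^{M_1}_Z W,X)$ to a combination of $g_1(\nabla^{M_1}_Z\omega W,\varphi X)$, $g_1(\nabla^{M_1}_Z\omega\phi W,X)$ and $g_1(W,\varphi Z)\eta(X)$. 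I would then decompose $\nabla^{M_1}_Z\omega W$ and $\nabla^{M_1}_Z\omega\phi W$ via \eqref{nvx} and use \eqref{jx}: the vertical contributions pair with $\mathcal BX$ to give $g_1(\mathcal T_Z\omega W,\mathcal BX)$, and the horizontal contributions, rewritten with \eqref{npixy}, produce the terms $g_2((\nabla\pi_*)(Z,\omega W),\pi_*X)$ and $g_2((\nabla\pi_*)(Z,\omega\phi W),\pi_*X)$. Setting $g_1(\nabla^{M_1}_Z W,X)=0$ then delivers \eqref{4s5}.

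The real difficulty will be combinatorial rather than conceptual: pushing $g_1(\nabla^{M_1}_Z W,X)$ through \eqref{s2} and \eqref{s3} alone just returns the tautology $g_1(\nabla^{M_1}_Z W,X)=g_1(\nabla^{M_1}_Z W,X)$, so one has to split off the $\mathcal T$- and $\mathcal H\nabla^{M_1}$-parts and convert the horizontal pieces into $(\nabla\pi_*)$-terms at precisely the right step in order to break the loop. One must also track every $\xi$-contribution carefully: here $\xi$ is horizontal, so --- unlike in the classical $\xi$-tangent situation --- these terms need not vanish, and they are what produce the $\eta(X)$-summand in \eqref{4s5}.
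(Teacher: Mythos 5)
Your reading of ``$D_2$ is parallel'' and its splitting into the two scalar conditions $g_1(\nabla^{^{M_1}}_Z W,U)=0$ and $g_1(\nabla^{^{M_1}}_Z W,X)=0$ is exactly what the paper does, and your derivation of \eqref{4s4} coincides with the paper's: both reduce $g_1(\nabla^{^{M_1}}_Z W,U)$ to $-g_1(\phi W,\hat{\nabla}_{Z}\varphi U)+g_2(\pi_*\omega W,(\nabla\pi_*)(Z,\varphi U))$, the only cosmetic difference being that you split $\varphi W=\phi W+\omega W$ before moving $\varphi$ across the metric rather than after. That half is correct.

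The gap is in the second half, and it is concrete: the route you describe does \emph{not} produce the $g_1(W,\varphi Z)\eta(X)$ summand of \eqref{4s5}, contrary to your closing claim that the $\xi$-contributions are what generate it. Differentiating your identity $\sin^2\theta\,W=-\varphi(\omega W)-\omega\phi W$ along $Z$ and pairing with $X$, the correction $(\nabla^{^{M_1}}_Z\varphi)(\omega W)=g_1(Z,\omega W)\xi-\eta(\omega W)Z$ vanishes outright (vertical $\perp$ horizontal, and $\omega W\perp\xi$ since $\xi\in\mu$), so you land on
\begin{align*}
\sin^2\theta\, g_1(\nabla^{^{M_1}}_Z W,X)=g_1(\mathcal{T}_Z\omega W,\mathcal{B}X)+g_1(\mathcal{H}\nabla^{^{M_1}}_Z\omega W,\mathcal{C}X)-g_1(\mathcal{H}\nabla^{^{M_1}}_Z\omega\phi W,X),
\end{align*}
with no $\eta(X)$ term at all. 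Equivalently, if you run the paper's version starting from $g_1(\nabla^{^{M_1}}_Z W,X)=g_1(\nabla^{^{M_1}}_Z\varphi W,\varphi X)+\eta(\nabla^{^{M_1}}_Z W)\eta(X)$, the term $\eta(\nabla^{^{M_1}}_Z W)\eta(X)=g_1(W,\phi Z)\eta(X)$ is exactly cancelled by the contribution $g_1(Z,\phi W)\eta(X)$ coming from $(\nabla^{^{M_1}}_Z\varphi)(\phi W)=g_1(Z,\phi W)\xi$ when $\varphi$ is pushed through $\nabla^{^{M_1}}_Z\phi W$ to invoke Theorem \ref{teo1}; the paper silently drops this correction, which is how its $\eta(X)$ summand survives, and it also writes $\pi_*X$ where the pairing with $\mathcal{C}X$ actually yields $\pi_*\mathcal{C}X$. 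So, carried out as planned, your computation terminates at a condition that disagrees with the stated \eqref{4s5} in both the $\eta(X)$ term and the $\pi_*\mathcal{C}X$ versus $\pi_*X$ slot; the step that fails is the unverified assertion that the $\xi$-terms ``produce the $\eta(X)$-summand,'' and you would need either to rework the bookkeeping to recover the displayed \eqref{4s5} (which your identity will not give) or to record the corrected formula explicitly.
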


\begin{proof}
Given $Z,W\in \Gamma (D_{2})$ and $U\in\Gamma(D_1),$ using (\ref{4s1}), (\ref{jv}) and (\ref{nvw}), we get
\begin{align*}
g_{1}(\nabla^{^{M_1}}_{Z}W,U)&=-g_{1}(\phi W,\hat{\nabla}_{Z}\varphi U)-g_{2}(\omega W,\pi _{\ast}(\mathcal{H}\nabla^{^{M_1}}_{Z}\varphi U)).
\end{align*}
Since $\pi$ is a semi-slant $\xi^\perp-$Riemannian submersion, using (\ref{nvw}) and (\ref{npixy}), we arrive
\begin{align*}
g_{1}(\nabla^{^{M_1}}_{Z}W,U)&=-g_{1}(\phi W,\hat{\nabla}_{Z}\varphi U)+g_{2}(\pi_{\ast}\omega W,(\nabla \pi _{\ast })(Z,\varphi U))
\end{align*}%
which completes (\ref{4s4}). On the other hand, by using (\ref{4s1}) and(\ref{jv}) we get
\begin{align*}
g_{1}(\nabla^{^{M_1}}_{Z}W,X)&=-g_{1}(\varphi\nabla^{^{M_1}}_{Z}\varphi W,X)+g_{1}(W,\varphi Z)\eta (X)
\end{align*}
for all $Z,W\in \Gamma (D_{2}), U\in\Gamma(D_1)$ and $X\in\Gamma((ker\pi_*)^\perp).$ Then (\ref{nvx}) and (\ref{phiv}) imply that
\begin{align*}
g_{1}(\nabla^{^{M_1}}_{Z}W,X)&=\cos ^{2}\theta g_{1}(\nabla^{^{M_1}}_{Z}W,X)-g_{1}(\mathcal{H}\nabla^{^{M_1}}_{Z}\omega\phi W,X)\\
&+g_{1}(\nabla^{^{M_1}}_{Z}\omega W,\mathcal{B}X)+g_{1}(\nabla^{^{M_1}}_{Z}\omega W,\mathcal{C}X)+g_{1}(W,\varphi Z)\eta(X).
\end{align*}
Now, using the character of $\pi$ and (\ref{nvx}), we get
\begin{align*}
\sin^{2}\theta g_{1}(\nabla^{^{M_1}}_{Z}W,X)&=-g_{2}(\pi_{\ast}(\mathcal{H}\nabla\nabla^{^{M_1}}_{Z}\omega\phi W),\pi _{\ast}(X))
+g_{2}(\mathcal{T}_{Z}\omega W,\mathcal{B}X)\\
&+g_{2}(\pi_*(\nabla^{^{M_1}}_{Z}\omega\phi W),\pi_*(X))+g_{1}(W,\varphi Z)\eta(X).
\end{align*}
After some calculations, we have
\begin{align*}
\sin^{2}\theta g_{1}(\nabla^{^{M_1}}_{Z}W,X)&=g_{2}((\nabla\pi_{\ast})(Z,\omega\phi W),\pi_{\ast }(X))+g_{1}(\mathcal{T}_{Z}\omega W,\mathcal{B}X) \\
&-g_{2}((\nabla\pi_{\ast})(Z,\omega W),\pi_{\ast}(X))+g_{1}(W,\varphi Z)\eta(X),
\end{align*}
which gives (\ref{4s5}).
\end{proof}

Since $(ker\pi_*)$ is integrable, we only study the integrability of the distribution $(ker\pi_{*})^\perp$ and
then we investigate the geometry of leaves of $ker\pi_{*}$ and $(ker\pi_{*})^\perp$.

\begin{theorem}
Let $\pi$ be a semi slant $\xi^\perp-$Riemannian submersion from a Sasakian manifold $(M_1,\varphi,\xi,\eta,g_1)$
onto a Riemannian manifold $(M_2,g_2)$ with a semi-slant angle $\theta.$ Then the distribution $(\ker \pi _{\ast })^{\bot }$ is integrable if and only if%
\begin{align}
g_{2}((\nabla \pi_{\ast})(Y,\phi V),\pi_{\ast}(X))+g_{2}((\nabla \pi
_{\ast})(X,\phi V),\pi_{\ast}(X))&=g_{1}(\phi V,\mathcal{V}(\nabla^{^{M_1}}_{X}\mathcal{B}Y+%
\nabla^{^{M_1}}_{Y}\mathcal{B}X)) \label{4s6}
\end{align}
and%
\begin{align}
g_{2}((\nabla\pi _{\ast})(X,\mathcal{C}Y)-(\nabla\pi_{\ast})(Y,\mathcal{C}X),\pi_{\ast
}\omega W)&=g_{1}(\mathcal{A}_{X}\mathcal{B}Y+\mathcal{A}_{Y}\mathcal{B}X,\omega W)\notag\\
&+\eta(Y)g_1(X,\omega W)-\eta(X)g_1(Y,\omega W) \label{4s7}
\end{align}
for $X, Y\in\Gamma((ker\pi_{\ast })^{\bot}), V\in\Gamma(D_{1})$ and $W\in\Gamma(D_{2}).$
\end{theorem}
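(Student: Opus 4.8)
The plan is the standard one for this family of submersions. The distribution $(\ker\pi_*)^\perp$ is integrable if and only if, for all $X,Y\in\Gamma((\ker\pi_*)^\perp)$, the bracket $[X,Y]$ has vanishing vertical part, i.e. $g_1([X,Y],U)=0$ for every $U\in\Gamma(\ker\pi_*)$. Since $\xi\in\Gamma((\ker\pi_*)^\perp)$ we have $\eta|_{\ker\pi_*}\equiv 0$, and since $\ker\pi_*=D_1\oplus D_2$ it suffices to test this against $U=V\in\Gamma(D_1)$, which will yield \eqref{4s6}, and against $U=W\in\Gamma(D_2)$, which will yield \eqref{4s7}.

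In each case I start from $g_1([X,Y],U)=g_1(\nabla^{M_1}_XY,U)-g_1(\nabla^{M_1}_YX,U)$ and move $\varphi$ onto the other slot. Because $\eta(U)=0$, \eqref{s1} gives $U=-\varphi^2U$; combining this with the skew-symmetry $g_1(A,\varphi B)=-g_1(\varphi A,B)$ coming from \eqref{s2} and with the Sasakian identity \eqref{s3} in the form $\varphi\nabla^{M_1}_XY=\nabla^{M_1}_X\varphi Y-g_1(X,Y)\xi+\eta(Y)X$, one reduces $g_1(\nabla^{M_1}_XY,U)$ to $g_1(\nabla^{M_1}_X\varphi Y,\varphi U)+\eta(Y)\,g_1(X,\varphi U)$, the $\xi$-term dropping since $\varphi\xi=0$. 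For $U=V\in\Gamma(D_1)$ one has $\varphi V=\phi V$ vertical, so $g_1(X,\varphi V)=0$ and no $\eta$-term remains; for $U=W\in\Gamma(D_2)$ one has $g_1(X,\varphi W)=g_1(X,\omega W)$, which is precisely the origin of the terms $\eta(Y)g_1(X,\omega W)-\eta(X)g_1(Y,\omega W)$ in \eqref{4s7}. Now I decompose $\varphi Y=\mathcal{B}Y+\mathcal{C}Y$ (and $\varphi X$ likewise), together with $\varphi U=\phi U+\omega U$, and feed the pieces into the O'Neill formulas \eqref{nvw}--\eqref{nxy}, the skew-symmetry of $\mathcal{A}$, and the defining relation \eqref{npixy} of $(\nabla\pi_*)$ together with the fact that $\pi_*$ is a fibrewise isometry on the horizontal bundle: the horizontal $\mathcal{C}$-parts, pushed through $\pi_*$, collapse against the horizontal component of $\varphi U$ into the $(\nabla\pi_*)$-terms, while the vertical $\mathcal{B}$-parts collapse into the $\mathcal{V}\nabla^{M_1}$, $\mathcal{A}$, $\hat\nabla$, $\mathcal{T}$ terms appearing on the right-hand sides.

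For the $D_2$-computation one further ingredient is required: after collecting everything the bracket appears multiplied by $\sin^2\theta$. Here Theorem~\ref{teo1} enters, through $\phi^2W=-\cos^2\theta\,W$, equivalently $\mathcal{B}\omega W=-\sin^2\theta\,W$ via the relation $\phi^2+\mathcal{B}\omega=-\mathrm{id}$; this lets one factor out $\sin^2\theta$ and, dividing by it (legitimate for a proper semi-slant submersion, $\theta\ne\pi/2$), arrive at \eqref{4s7}. In the $D_1$-computation no slant factor appears and \eqref{4s6} comes out directly. I expect the main obstacle to be the book-keeping of horizontal versus vertical components under the repeated action of $\varphi$, which does not respect the splitting $TM_1=\ker\pi_*\oplus(\ker\pi_*)^\perp$: one must check that the auxiliary terms generated along the way — pull-back connection pieces $\nabla^\pi_X\pi_*(\,\cdot\,)$, the $\mathcal{A}$-contributions coming from pushing $\varphi$ onto a horizontal field, and so on — either cancel in the antisymmetrization $X\leftrightarrow Y$ or are reabsorbed into one of the listed tensors, and the clean extraction of the $\sin^2\theta$ factor in the $D_2$ case is the most delicate point.
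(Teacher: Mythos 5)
Your proposal follows essentially the same route as the paper's proof: test $[X,Y]$ against $D_1$ and $D_2$ separately, move $\varphi$ across the metric using $U=-\varphi^2U$ and the Sasakian identity (picking up exactly the $\eta(Y)g_1(X,\omega W)-\eta(X)g_1(Y,\omega W)$ terms in the $D_2$ case), decompose with $\mathcal{B},\mathcal{C},\phi,\omega$, absorb the pieces into the O'Neill tensors and $(\nabla\pi_*)$ (with the $(\nabla\pi_*)(X,Y)$-terms cancelling by symmetry of the second fundamental form), and extract the $\sin^2\theta$ factor from $\phi^2W=-\cos^2\theta\,W$. The only slip is your parenthetical about dividing by $\sin^2\theta$: that division fails when $\theta=0$, not when $\theta=\pi/2$ (the paper divides silently and does not address this case either).
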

\begin{proof}
From (\ref{4s1}), (\ref{s2}) and (\ref{s3}), we have
\begin{align*}
g_{1}([X,Y],V)&=g_{1}(\nabla^{^{M_1}}_{X}\varphi Y,\varphi V)-g_{1}(\nabla^{^{M_1}}_{Y}\varphi X,\varphi V)
\end{align*}
for $X,Y\in \Gamma((\ker \pi _{\ast })^{\bot })$ and $V\in \Gamma (D_{1}).$ Then by (\ref{jx}), we have
\begin{align*}
g_{1}([X,Y],V)&=-g_{1}(\mathcal{B}Y,\nabla^{^{M_1}}_{X}\varphi V)-g_{1}(\mathcal{C}Y,\nabla^{^{M_1}}_{X}\varphi
V)+g_{1}(\mathcal{B}X,\nabla^{^{M_1}}_{Y}\varphi V)+g_{1}(\mathcal{C}X,\nabla^{^{M_1}}_{Y}\varphi V).
\end{align*}
From (\ref{nvx}), we get
\begin{align*}
g_{1}([X,Y],V)&=g_{1}(\varphi V,\mathcal{A}_{Y}\mathcal{B}X+\mathcal{V}\nabla^{^{M_1}}_{X}\mathcal{B}Y)-g_{2}(\pi _{\ast
}(\mathcal{C}Y),\pi_{\ast}(\nabla^{^{M_1}}_{X}\varphi V))\\
&-g_{1}(\varphi V,\mathcal{A}_{X}\mathcal{B}Y+\mathcal{V}\nabla^{^{M_1}}_{Y}\mathcal{B}X)
-g_{2}(\pi_{\ast}(\mathcal{C}X),\pi_{\ast}(\nabla^{^{M_1}}_{Y}\varphi V)).
\end{align*}
Using (\ref{npixy}), we obtain
\begin{align*}
g_{1}([X,Y],V)&=g_{1}(\varphi V,\mathcal{V}(\nabla^{^{M_1}}_{X}\mathcal{B}Y-\nabla^{^{M_1}}_{Y}\mathcal{B}X))+g_{2}(\pi
_{\ast }(\mathcal{C}Y),(\nabla \pi _{\ast})(X, \varphi V))\\
&-g_{2}(\pi _{\ast}(\mathcal{C}X),(\nabla\pi _{\ast})(Y,\varphi V)),
\end{align*}%
which gives (\ref{4s6}). In a similar way, by virtue of (\ref{4s1}), (\ref{s2}) and (\ref{s3}), we have
\begin{align*}
g_{1}([X,Y],W)&=g_{1}(\varphi\nabla^{^{M_1}}_{X}Y,\phi W)+g_{1}(\varphi\nabla^{^{M_1}}_{X}Y,\omega W)+\eta
(Y)g_{1}(X,\omega W) \\
&-g_{1}(\varphi\nabla^{^{M_1}}_{Y}X,\phi W)-g_{1}(\varphi\nabla^{^{M_1}}_{Y}X,\omega W)-\eta
(X)g_{1}(Y,\omega W)
\end{align*}
for $X,Y\in \Gamma ((\ker \pi _{\ast })^{\bot })$ and $W\in \Gamma (D_{2}).$ Then by (\ref{phiv}) and (\ref{nxy}), we arrive
\begin{align*}
g_{1}([X,Y],W)&=-g_{1}(\nabla^{^{M_1}}_{X}Y,\phi ^{2}W)-g_{1}(\nabla^{^{M_1}}_{X}Y,\omega\phi W)+g_{1}(\nabla^{^{M_1}}_{X}\mathcal{B}Y,\omega W)
+g_{1}(\nabla^{^{M_1}}_{X}\mathcal{C}Y,\omega W) \\
&-g_{1}(\nabla^{^{M_1}}_{Y}X,\phi ^{2}W)-g_{1}(\nabla^{^{M_1}}_{Y}X,\omega\phi W)+g_{1}(\nabla^{^{M_1}}_{Y}\mathcal{B}X,\omega W)
+g_{1}(\nabla^{^{M_1}}_{Y}\mathcal{C}X,\omega W)\\
&+\eta (Y)g_{1}(X,\omega W)-\eta (X)g_{1}(Y,\omega W).
\end{align*}
Now, using the character of $\pi$, (\ref{nxy}) and (\ref{npixy}) imply that
\begin{align*}
g_{1}([X,Y],W)&=\cos ^{2}\theta g_{1}([X,Y],W)+g_{2}((\nabla \pi _{\ast })(X,Y),\omega\phi
W)+g_{1}(\mathcal{A}_{X}\mathcal{B}Y,\omega W)\\
&-g_{2}((\nabla \pi _{\ast })(X,\mathcal{C}Y),\pi _{\ast}\omega W)-g_{2}((\nabla \pi _{\ast })(Y,X),\omega\phi W)+g_{1}(\mathcal{A}_{Y}\mathcal{B}X,\omega W)\\
&+g_{2}((\nabla\pi _{\ast })(Y,\mathcal{C}X),\pi _{\ast}\omega W)+\eta (Y)g_{1}(X,\omega W)-\eta (X)g_{1}(Y,\omega W)
\end{align*}
so with some elementary calculations, we find
\begin{align*}
\sin ^{2}\theta g_{1}([X,Y],W)&=g_{2}((\nabla \pi _{\ast })(Y,\mathcal{C}X)-(\nabla
\pi _{\ast })(X,\mathcal{C}Y),\pi _{\ast }\omega W)+g_{1}(\mathcal{A}_{X}\mathcal{B}Y+\mathcal{A}_{Y}\mathcal{B}X,\omega W) \\
&+\eta (Y)g_{1}(X,\omega W)-\eta (X)g_{1}(Y,\omega W),
\end{align*}
which completes (\ref{4s7}).
\end{proof}

For the geometry of leaves $(\ker \pi _{\ast })^{\bot }$, we obtain:
\begin{theorem}\label{yataypar}
Let $\pi$ be a semi slant $\xi^\perp-$Riemannian submersion from a Sasakian manifold $(M_1,\varphi,\xi,\eta,g_1)$
onto a Riemannian manifold $(M_2,g_2)$ with a semi-slant angle $\theta.$ Then the distribution $(\ker \pi _{\ast })^{\bot }$ is parallel if and only if%
\begin{align}
g_{1}(V,\mathcal{V}\nabla^{^{M_1}}_{X}\phi \mathcal{B}Y+\mathcal{A}_{X}\omega\mathcal{B}Y)&=g_{2}(\pi_*(\mathcal{C}Y),(\nabla \pi_*)(X,\varphi V)) \label{4s8}
\end{align}
and%
\begin{align}
g_{1}(\mathcal{A}_{X}\omega W,\mathcal{B}Y)+\eta(Y)g_{1}(X,\omega W)&=g_{2}((\nabla \pi_*)(X,Y),\pi_*\omega\phi W)\notag\\
&-g_{2}((\nabla \pi_*)(X,\mathcal{C}Y),\pi_*\omega W),\label{4s9}
\end{align}
for $X, Y\in \Gamma((\ker \pi_*)^{\bot}), V\in\Gamma(D_1)$ and $W\in\Gamma(D_{2}).$
\end{theorem}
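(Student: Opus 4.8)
The plan is to reduce parallelism of $(\ker\pi_*)^\perp$ to two scalar identities, exploiting the splitting $\ker\pi_* = D_1\oplus D_2$: a vector is vertical exactly when it is $g_1$-orthogonal to $(\ker\pi_*)^\perp$, so $(\ker\pi_*)^\perp$ is parallel if and only if $g_1(\nabla^{^{M_1}}_X Y, V) = 0$ and $g_1(\nabla^{^{M_1}}_X Y, W) = 0$ for all $X, Y\in\Gamma((\ker\pi_*)^\perp)$, $V\in\Gamma(D_1)$ and $W\in\Gamma(D_2)$. I will show that the first identity is equivalent to (\ref{4s8}) and the second to (\ref{4s9}).

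For the first, I would start exactly as in the derivation of (\ref{4s1}) from the Sasakian formula (\ref{s3}), namely $\varphi\nabla^{^{M_1}}_X Y = \nabla^{^{M_1}}_X\varphi Y - g_1(X,Y)\xi + \eta(Y)X$, pair with $\varphi V$, and use (\ref{s2}) together with $\eta(V)=0$ (since $\xi\in\Gamma((\ker\pi_*)^\perp)$) and $\varphi V = \phi V\in\Gamma(D_1)$ (because $\phi D_1 = D_1$ and $\omega D_1 = 0$), so that $g_1(X,\varphi V) = 0 = \eta(\varphi V)$; this collapses the computation to $g_1(\nabla^{^{M_1}}_X Y, V) = g_1(\nabla^{^{M_1}}_X\varphi Y, \varphi V)$. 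Splitting $\varphi Y = \mathcal{B}Y + \mathcal{C}Y$ by (\ref{jx}), I would treat the horizontal summand $\mathcal{C}Y$ by moving the connection onto $\varphi V$ and invoking $\pi_*\varphi V = 0$, the definition (\ref{npixy}) of the second fundamental form of $\pi$, and the fact that $\pi_*$ is a fibrewise isometry on horizontal vectors, producing the term $g_2(\pi_*\mathcal{C}Y, (\nabla\pi_*)(X,\varphi V))$; and I would treat the vertical summand $\mathcal{B}Y$, which lies in $D_2$ since $\mathcal{B}((\ker\pi_*)^\perp) = D_2$, by applying $\varphi$ once more, $\varphi\mathcal{B}Y = \phi\mathcal{B}Y + \omega\mathcal{B}Y$, and reading off the vertical parts $\mathcal{V}\nabla^{^{M_1}}_X\phi\mathcal{B}Y$ and $\mathcal{A}_X\omega\mathcal{B}Y$ from the O'Neill relations (\ref{nxv})--(\ref{nxy}). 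Collecting the terms and equating to zero gives (\ref{4s8}).

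For the second identity I would follow the same template, but now the slant angle intervenes. Beginning once more from (\ref{s3}) and $\varphi^2 = -I + \eta\otimes\xi$, pairing $\nabla^{^{M_1}}_X Y$ with $W$ and writing $\varphi W = \phi W + \omega W$ by (\ref{jv}), the $\xi$-bookkeeping produces the correction $\eta(Y)g_1(X,\omega W)$ together with expressions in which $\varphi$ acts on $\phi W$; at that stage I would substitute the slant relation $\phi^2 W = -\cos^2\theta W$ of Theorem \ref{teo1}, which makes a multiple $\cos^2\theta\, g_1(\nabla^{^{M_1}}_X Y, W)$ reappear on the right. Transposing it leaves $\sin^2\theta\, g_1(\nabla^{^{M_1}}_X Y, W) = \cdots$, after which I would rewrite the horizontal contributions through (\ref{npixy}) as $g_2((\nabla\pi_*)(X,Y), \pi_*\omega\phi W) - g_2((\nabla\pi_*)(X,\mathcal{C}Y), \pi_*\omega W)$ and the vertical ones through (\ref{nxy}) as $g_1(\mathcal{A}_X\omega W, \mathcal{B}Y) + \eta(Y)g_1(X,\omega W)$. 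Dividing by $\sin^2\theta$ (the case $\theta = 0$, hence $D_2 = \{0\}$, being vacuous) yields (\ref{4s9}).

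The routine part is the repeated $\mathcal{V}/\mathcal{H}$ and $\phi/\omega/\mathcal{B}/\mathcal{C}$ decompositions, entirely in the style of Theorems \ref{d1par} and \ref{d2par}. The delicate point will be the exact placement of the substitution $\phi^2 W = -\cos^2\theta W$ and of the Sasakian correction $(\nabla^{^{M_1}}_X\varphi)Y = g_1(X,Y)\xi - \eta(Y)X$, so that the $\cos^2\theta$ contributions collapse to a single factor $\sin^2\theta$ on the left and the term $\eta(Y)g_1(X,\omega W)$ survives with the correct sign instead of being doubled or absorbed during the rearrangement.
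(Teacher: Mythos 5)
Your proposal is correct and follows essentially the same route as the paper's own proof: reducing parallelism of $(\ker\pi_*)^\perp$ to the vanishing of $g_1(\nabla^{^{M_1}}_XY,V)$ and $g_1(\nabla^{^{M_1}}_XY,W)$, handling the $D_1$-component via the split $\varphi Y=\mathcal{B}Y+\mathcal{C}Y$ with the second fundamental form absorbing the $\mathcal{C}Y$ term, and handling the $D_2$-component by inserting $\phi^2W=-\cos^2\theta\,W$ to isolate a $\sin^2\theta$ factor before applying (\ref{npixy}). The sign bookkeeping you flag as delicate (the $\eta(Y)g_1(X,\omega W)$ term and the skew-adjointness identity $g_1(\mathcal{A}_X\mathcal{B}Y,\omega W)=-g_1(\mathcal{A}_X\omega W,\mathcal{B}Y)$) works out exactly as in the paper.
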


\begin{proof}
Given $X, Y\in \Gamma ((\ker \pi _{\ast })^{\bot })$ and $V\in \Gamma (D_{1})$, by (\ref{s2}) and (\ref{s3}), we have
\begin{align*}
g_{1}(\nabla^{^{M_1}}_{X}Y,V)&=g_{1}(\nabla^{^{M_1}}_{X}\varphi Y,\varphi V).
\end{align*}
Thus, from (\ref{jx}), we find
\begin{align*}
g_{1}(\nabla^{^{M_1}}_{X}Y,V)&=-g_{1}(\mathcal{B}Y,\nabla^{^{M_1}}_{X}\varphi V)-g_{1}(\mathcal{C}Y,\nabla^{^{M_1}}_{X}\varphi V).
\end{align*}
Taking into account that $\pi$ is a semi-slant $\xi^\perp-$Riemannian submersion, we get
\begin{align*}
g_{1}(\nabla^{^{M_1}}_{X}Y,V)&=-g_{1}(V,\mathcal{A}_{X}\phi \mathcal{B}Y+\mathcal{V}\nabla^{^{M_1}}_{X}\phi \mathcal{B}Y)-g_{1}(V,\mathcal{A}_{X}\omega\mathcal{B}Y+%
\mathcal{H}\nabla^{^{M_1}}_{X}\omega\mathcal{B}Y)\\
&+g_{2}(\pi_*(CY),\pi_*(\nabla^{^{M_1}}_{X}\varphi V)).
\end{align*}
By (\ref{npixy}), we obtain
\begin{align*}
g_{1}(\nabla^{^{M_1}}_{X}Y,V)&=-g_{1}(V,\mathcal{V}\nabla^{^{M_1}}_{X}\phi \mathcal{B}Y+\mathcal{A}_{X}\omega \mathcal{B}Y)+g_{2}(\pi _*(CY),(\nabla \pi _*)(X,\varphi V)),
\end{align*}
which gives (\ref{4s8}). On the other hand, for $X,Y\in \Gamma((\ker \pi _*)^\perp),$ $W\in \Gamma(D_{2})$, by virtue of (\ref{s2}), (\ref{s3}), we obtain
\begin{align*}
g_{1}(\nabla^{^{M_1}}_{X}Y,W)&=g_{1}(\nabla^{^{M_1}}_{X}\varphi Y,\phi W)+g_{1}(\nabla^{^{M_1}}_{X}\varphi Y,\omega W)-\eta(Y)g_{1}(X,\omega W).
\end{align*}
From (\ref{jv}) and (\ref{jx}), we arrive
\begin{align*}
g_{1}(\nabla^{^{M_1}}_{X}Y,W)&=-g_{1}(\nabla^{^{M_1}}_{X}Y,\phi ^{2}W)-g_{1}(\nabla^{^{M_1}}_{X}Y,\omega\phi W)
+g_{1}(\nabla^{^{M_1}}_{X}\mathcal{B}Y,\omega W) \\
&+g_{1}(\nabla^{^{M_1}}_{X}\mathcal{C}Y,\omega W)-\eta (Y)g_{1}(X,\omega W).
\end{align*}
Since $\pi$ is a semi-slant $\xi^\perp-$Riemannian submersion, using (\ref{phiv}) and the character of $\pi$ we get
\begin{align*}
g_{1}(\nabla^{^{M_1}}_{X}Y,W)&=\cos ^{2}\theta g_{1}(\nabla^{^{M_1}}_{X}Y,W)-g_{1}(\pi_*(\mathcal{H}\nabla^{^{M_1}}_{X}Y),\pi_*(\omega\phi W))+g_{1}(\mathcal{A}_{X}\mathcal{B}Y,\omega W) \\
&+g_{1}(\pi_*(\mathcal{H}\nabla^{^{M_1}}_{X}\mathcal{C}Y),\pi_*(\omega W))-\eta (Y)g_{1}(X,\omega W)
\end{align*}
Now, using (\ref{npixy}), we obtain
\begin{align*}
\sin ^{2}\theta g_{1}(\nabla^{^{M_1}}_{X}Y,W) &=g_{2}((\nabla \pi _{\ast})(X,Y),\pi _{\ast }\omega \phi W)+g_{1}(\mathcal{A}_{X}\mathcal{B}Y,\omega W)
-g_{2}((\nabla \pi _{\ast })(X,\mathcal{C}Y),\pi _{\ast }\omega W)\\
&-\eta (Y)g_{1}(X,\omega W)
\end{align*}%
which gives (\ref{4s9}).
\end{proof}

Similarly, we get:

\begin{theorem}\label{dikeypar}
Let $\pi$ be a semi slant $\xi^\perp-$Riemannian submersion from a Sasakian manifold $(M_1,\varphi,\xi,\eta,g_1)$
onto a Riemannian manifold $(M_2,g_2)$ with a semi-slant angle $\theta.$ Then the distribution $(ker\pi_*)$ is parallel if and only if%
\begin{align}
g_{1}(\omega V,\mathcal{T}_{U}\mathcal{B}X)+g_{1}(V,\phi U)\eta (X)&=g_{2}((\nabla \pi _{\ast
})(U,\mathcal{C}X),\pi _{\ast }\omega V)-g_{2}((\nabla \pi _{\ast })(U,X),\pi _{\ast }\omega\phi V) \label{4s10}
\end{align}
\end{theorem}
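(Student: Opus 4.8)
The plan is to reduce the statement to a single scalar identity and then run the Sasakian structure through it, exactly in the spirit of the proof of Theorem~\ref{yataypar} but with the vertical and horizontal roles interchanged. Since $\hat{\nabla}_U V$ is automatically vertical, the relation (\ref{nvw}) shows that $\ker\pi_*$ is parallel if and only if $\mathcal{T}_U V=0$ for all $U,V\in\Gamma(\ker\pi_*)$, which is equivalent to
\[
g_1(\nabla^{^{M_1}}_U V,X)=0\qquad\text{for all }U,V\in\Gamma(\ker\pi_*),\ X\in\Gamma((\ker\pi_*)^\perp).
\]
This is a \emph{single} condition --- verticality of $\nabla_U V$ tested against an arbitrary horizontal $X$ --- which is why, in contrast with Theorems~\ref{d1par} and~\ref{d2par}, only one equation appears. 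Hence the whole proof is the computation of $g_1(\nabla^{^{M_1}}_U V,X)$ and its rearrangement into (\ref{4s10}).

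First I would feed in the Sasakian structure as in (\ref{4s1}): using (\ref{s2}), (\ref{s3}), $\varphi\xi=0$, $\eta(V)=0$ (as $V$ is vertical and $\xi$ horizontal), and (\ref{phiu}), one gets
\[
g_1(\nabla^{^{M_1}}_U V,X)=g_1(\nabla^{^{M_1}}_U\varphi V,\varphi X)+g_1(V,\phi U)\eta(X),
\]
which already produces the term $g_1(V,\phi U)\eta(X)$ of (\ref{4s10}). Next I would decompose $\varphi V=\phi V+\omega V$ by (\ref{jv}) and $\varphi X=\mathcal{B}X+\mathcal{C}X$ by (\ref{jx}), and split $\nabla^{^{M_1}}_U\phi V$ and $\nabla^{^{M_1}}_U\omega V$ into their $\mathcal{V}$- and $\mathcal{H}$-parts via (\ref{nvw}) and (\ref{nvx}). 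Pairing the vertical component $\mathcal{T}_U\omega V$ of $\nabla^{^{M_1}}_U\omega V$ against $\mathcal{B}X$ and using the skew-symmetry of $\mathcal{T}_U$ yields $g_1(\omega V,\mathcal{T}_U\mathcal{B}X)$; the horizontal parts are turned into second-fundamental-form terms by (\ref{npixy}) (using $\pi_*\phi V=0$ and $\pi_*\nabla^{^{M_1}}_U Z=-(\nabla\pi_*)(U,Z)$ on the relevant basic horizontal fields), which is the source of $g_2((\nabla\pi_*)(U,\mathcal{C}X),\pi_*\omega V)$ and $g_2((\nabla\pi_*)(U,X),\pi_*\omega\phi V)$.

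The real work is disposing of everything else. A naive expansion reproduces $g_1(\nabla^{^{M_1}}_U V,X)$ on the right with coefficient one, i.e.\ a tautology; so one must re-apply the Sasakian identity to $\nabla^{^{M_1}}_U\phi V$ --- legitimate since $\phi V$ is again vertical --- whereupon $\varphi\phi V=\phi^2 V+\omega\phi V$ enters and the slant identity $\phi^2 W=-\cos^2\theta\,W$ of (\ref{phiv}) supplies the missing factor: the $\cos^2\theta$ contribution recombines so that $g_1(\nabla^{^{M_1}}_U V,X)$ reappears with coefficient $\cos^2\theta$, and transposing and dividing by $\sin^2\theta$ leaves (\ref{4s10}). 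Along the way the algebraic lemmas ($\phi^2+\mathcal{B}\omega=-\mathrm{id}$, $\omega\phi+\mathcal{C}\omega=0$, $\mathcal{B}(\ker\pi_*)^\perp=D_2$, $\mathcal{T}_U\xi=\phi U$, $\hat{\nabla}_U\xi=-\omega U$), (\ref{omegau}), metric compatibility, and the fact that $\xi\in\mu$ is horizontal and orthogonal to $\omega D_2$, are used to check that all stray $\hat{\nabla}_U$-terms, the extra $U$-derivatives of inner products, and every $\eta$-term other than $g_1(V,\phi U)\eta(X)$ cancel. I expect the main obstacle to be purely this bookkeeping --- keeping signs straight and making sure nothing spurious survives; the conceptual content is carried entirely by the slant relation (\ref{phiv}), which is what makes the identity non-vacuous, together with $\mathcal{T}_U\xi=\phi U$ governing the interaction of $\xi$ with the fibres.
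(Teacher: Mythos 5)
Your reduction is the right one and is what the paper does implicitly: since $\hat{\nabla}_UV$ is vertical, parallelism of $\ker\pi_*$ is the single scalar condition $g_1(\nabla^{^{M_1}}_UV,X)=0$, and the proof is one computation of that quantity followed by the $\sin^2\theta$ division. Your opening identity $g_1(\nabla^{^{M_1}}_UV,X)=g_1(\nabla^{^{M_1}}_U\varphi V,\varphi X)+g_1(V,\phi U)\eta(X)$ is correct (indeed more careful than the paper's first displayed line, which drops the $\eta$-term and reintroduces one later). Where your route genuinely differs from the paper's is in the treatment of the $\phi V$-summand. You propose to re-apply the Sasakian identity to $\nabla^{^{M_1}}_U\phi V$, so that $\varphi\phi V=\phi^2V+\omega\phi V$ and (\ref{phiv}) act while the covariant derivative still sits on the vertical side; that is the mechanism of the paper's proofs of Theorems \ref{d1par} and \ref{d2par}, and it naturally outputs second-fundamental-form terms of the shape $(\nabla\pi_*)(U,\omega V)$ and $(\nabla\pi_*)(U,\omega\phi W)$ (compare (\ref{4s5})). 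The paper's proof of this theorem instead first transfers the derivative across the metric onto $\varphi X$ (using $g_1(\varphi V,\varphi X)=0$), splits $\varphi X=\mathcal{B}X+\mathcal{C}X$, and only then applies $\varphi$ to $\phi V$; this is precisely what makes the derivative land on $X$ and $\mathcal{C}X$ and hence produces the terms $(\nabla\pi_*)(U,X)$ and $(\nabla\pi_*)(U,\mathcal{C}X)$ that actually appear in (\ref{4s10}). Your version proves an equivalent vanishing criterion, but to arrive at the literal formula (\ref{4s10}) you would need a further integration by parts (the discrepancy $Ug_1(\omega\phi V,X)$ is not zero in general), so either adopt the paper's order of operations or accept a differently-shaped but equivalent identity.

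One bookkeeping point to check before trusting the $\eta$-terms: if you carry the $+g_1(V,\phi U)\eta(X)$ from your first step consistently, re-applying the structure equation to $\nabla^{^{M_1}}_U\phi V$ produces a compensating $-g_1(V,\phi U)\eta(X)$ from $(\nabla^{^{M_1}}_U\varphi)\phi V$ (equivalently, from $\eta(X)g_1(\phi V,U)$ in the paper's route), so by a careful count these two contributions cancel rather than one of them surviving into the final formula as you assert. Reconciling this with the $g_1(V,\phi U)\eta(X)$ term displayed in (\ref{4s10}) is exactly the kind of sign audit you flag as the main obstacle, and it is where your write-up, and arguably the paper's, needs the most care.
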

for any $U\in\Gamma(D_1), V\in \Gamma (D_2)$ and $X\in \Gamma ((ker \pi _{\ast })^{\bot }).$
\begin{proof}
By virtue of (\ref{s2}) and (\ref{s3}), we have
\begin{align*}
g_{1}(\nabla^{^{M_1}}_{U}V,X) &=g_{1}(\nabla^{^{M_1}}_{U}\varphi V,\varphi X)
\end{align*}
for $U\in\Gamma(D_1), V\in \Gamma (D_2)$ and $X\in \Gamma ((ker \pi _{\ast })^{\bot }).$ Then, from (\ref{jv}), we arrive
\begin{align*}
g_{1}(\nabla^{^{M_1}}_{U}V,X) &=-g_{1}(\phi V,\varphi\nabla^{^{M_1}}_{U}X)-g_{1}(\omega V,\nabla^{^{M_1}}_{U}\varphi X).
\end{align*}
Now, by (\ref{jv}), (\ref{jx}) and (\ref{phiv}), we obtain
\begin{align*}
g_{1}(\nabla^{^{M_1}}_{U}V,X) &=g_{1}(\phi ^{2}V,\nabla^{^{M_1}}_{U}X)+g_{1}(\omega\phi V,\nabla^{^{M_1}}_{U}X)
-g_{1}(\omega V,\mathcal{T}_{U}\mathcal{B}X)\\
&-g_{1}(\omega V,\mathcal{H}\nabla^{^{M_1}}_{U}\mathcal{C}X)-g_{1}(V,\phi U)\eta (X).
\end{align*}
By virtue of (\ref{npixy}), (\ref{nvw}) and the character of $\pi,$ we arrive
\begin{align*}
g_{1}(\nabla^{^{M_1}}_{U}V,X) &=-\cos ^{2}\theta g_{1}(V,\nabla^{^{M_1}}_{U}X)
+g_{1}(\pi_*(\omega\phi V),\pi_*(\mathcal{H}\nabla^{^{M_1}}_{U}X))
-g_{1}(\omega V,\mathcal{T}_{U}\mathcal{B}X) \\
&-g_{1}(\pi_*(\omega V),\pi_*(\mathcal{H}\nabla^{^{M_1}}_{U}\mathcal{C}X))-g_{1}(V,\phi U)\eta (X)
\end{align*}%
so with some elementary calculations, we get
\begin{align*}
\sin ^{2}\theta g_{1}(\nabla^{^{M_1}}_{U}V,X)&=-g_{2}((\nabla \pi_*)(U,X),\pi
_*\omega\phi V)-g_{1}(\omega V,\mathcal{T}_{U}\mathcal{B}X)\\
&+g_{2}((\nabla \pi_*)(U,\mathcal{C}X),\pi_*\omega V)-g_{1}(V,\phi U)\eta (X)
\end{align*}
which completes (\ref{4s9}).
\end{proof}

We now recall the following characterization for locally (usual) product Riemannian manifold from  \cite{PR}.
Let $g$ be a Riemannian metric tensor on the manifold $M = M_1\times M_2$ and assume
that the canonical foliations $D_{M_1}$ and $D_{M_2}$ intersect perpendicularly everywhere.
Then $g$ is the metric tensor of a usual product of Riemannian manifolds if and
only if $D_{M_1}$ and $D_{M_2}$ are totally geodesic foliations.

By virtue of Theorem \ref{d1par}, Theorem \ref{d2par} and Theorem \ref{yataypar}, we have the following theorem;
\begin{theorem}
Let $\pi$ be a semi slant $\xi^\perp-$Riemannian submersion from a Sasakian manifold $(M_1,\varphi,\xi,\eta,g_1)$
onto a Riemannian manifold $(M_2,g_2)$ with a semi-slant angle $\theta.$ Then the total space $M_1$ is a locally product manifold of the leaves of $D_1$, $D_2$ and $(ker\pi_*)^\perp,$ i.e.,
$M_1=M_1{_{D_1}}\times M_1{_{D_2}}\times M_1{_{(ker\pi_*)^\perp}},$ if and only if
\begin{align*}
g_{2}((\nabla \pi_{\ast })(U,\varphi V),\pi _{\ast }\omega Z)&=g_{1}(\mathcal{T}_{U}\omega\phi Z,V),
\end{align*}
\begin{align*}
-g_{2}((\nabla \pi_{\ast })(U,\varphi V),\pi _{\ast }\mathcal{C}X)&=g_{1}(V,\hat{\nabla}_{U}\phi\mathcal{B}X
+\mathcal{T}_{U}\omega\mathcal{B}X)+g_{1}(V,\varphi U)\eta(X),
\end{align*}
\begin{align*}
g_{2}(\pi _{\ast }\omega W,(\nabla \pi_{\ast})(Z,\varphi U))&=g_{1}(\phi W,\hat{\nabla}_{Z}\varphi U),
\end{align*}
\begin{align*}
g_{2}((\nabla \pi _{\ast })(Z,\omega W),\pi _{\ast}(X))-g_{2}((\nabla\pi _{\ast
})(Z,\omega \phi W),\pi _{\ast}(X))&=g_{1}(\mathcal{T}_{Z}\omega W,\mathcal{B}X)+g_{1}(W,\varphi Z)\eta (X)
\end{align*}
and
\begin{align*}
g_{1}(V,\mathcal{V}\nabla^{^{M_1}}_{X}\phi \mathcal{B}Y+\mathcal{A}_{X}\omega\mathcal{B}Y)&=g_{2}(\pi_*(\mathcal{C}Y),(\nabla \pi_*)(X,\varphi V)),
\end{align*}
\begin{align*}
g_{1}(\mathcal{A}_{X}\omega W,\mathcal{B}Y)+\eta(Y)g_{1}(X,\omega W)&=g_{2}((\nabla \pi_*)(X,Y),\pi_*\omega\phi W)
-g_{2}((\nabla \pi_*)(X,\mathcal{C}Y),\pi_*\omega W)
\end{align*}
for $X, Y\in\Gamma((\ker \pi_*)^{\bot}),$ $U, V\in\Gamma(D_1)$ and $Z, W\in\Gamma(D_{2}).$
\end{theorem}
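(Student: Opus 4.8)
The plan is to invoke the classical characterization of locally product Riemannian manifolds recalled just above the statement (from \cite{PR}): a Riemannian metric on a manifold carrying mutually perpendicular foliations is the metric of a usual product if and only if each of those foliations is totally geodesic. Here the three candidate foliations are the leaves of $D_1$, $D_2$ and $(\ker\pi_*)^\perp$. Since $\ker\pi_*=D_1\oplus D_2$ is always integrable (it is the vertical distribution of a submersion), and $D_1\oplus D_2\oplus(\ker\pi_*)^\perp=TM_1$ with all three summands mutually orthogonal by construction, the only thing that needs checking is that the decomposition $M_1=M_1{}_{D_1}\times M_1{}_{D_2}\times M_1{}_{(\ker\pi_*)^\perp}$ holds exactly when each of $D_1$, $D_2$, and $(\ker\pi_*)^\perp$ defines a totally geodesic foliation, i.e. is parallel.

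First I would observe that "$D_i$ is parallel" in the sense already used in Theorems \ref{d1par}, \ref{d2par} means $\nabla^{M_1}_{E_1}E_2\in\Gamma(D_i)$ for all sections $E_1,E_2$ of $D_i$, which is precisely the statement that the leaves of $D_i$ are totally geodesic submanifolds of $M_1$; the same remark applies to $(\ker\pi_*)^\perp$ via Theorem \ref{yataypar}. So the product structure on $M_1$ is equivalent to the simultaneous parallelism of $D_1$, $D_2$ and $(\ker\pi_*)^\perp$. Then I would simply substitute: by Theorem \ref{d1par}, $D_1$ is parallel iff equations (\ref{4s2}) and (\ref{4s3}) hold; by Theorem \ref{d2par}, $D_2$ is parallel iff (\ref{4s4}) and (\ref{4s5}) hold; and by Theorem \ref{yataypar}, $(\ker\pi_*)^\perp$ is parallel iff (\ref{4s8}) and (\ref{4s9}) hold. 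Conjoining these six conditions yields exactly the six displayed equations in the statement, completing the proof.

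There is one subtlety worth spelling out rather than glossing: one must be sure that parallelism of $D_1$ and of $D_2$ together forces parallelism of $\ker\pi_*=D_1\oplus D_2$ as well (so that the vertical leaves genuinely split off as a product $M_1{}_{D_1}\times M_1{}_{D_2}$), and conversely. This follows because for $E_1\in\Gamma(D_1)$, $E_2\in\Gamma(D_2)$ one has $g_1(\nabla^{M_1}_{E_1}E_1,E_2)=-g_1(E_1,\nabla^{M_1}_{E_1}E_2)$ and the two parallelism hypotheses make both sides vanish, handling the mixed terms; combined with the parallelism of $(\ker\pi_*)^\perp$ this gives the full triple splitting of $TM_1$ and then the \cite{PR} criterion applies to each factor. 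The main (and essentially only) obstacle is therefore purely bookkeeping — checking that no extra "cross" condition between $D_1$ and $D_2$ is needed beyond (\ref{4s2})--(\ref{4s5}) — and this is resolved by the orthogonality identity above together with Lemma 3.3(e)--(f) controlling the $\xi$-direction; the rest is a direct citation of the three preceding theorems.
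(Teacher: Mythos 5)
Your overall route is exactly the paper's: the paper offers no written proof of this theorem beyond the sentence ``By virtue of Theorem \ref{d1par}, Theorem \ref{d2par} and Theorem \ref{yataypar}, we have the following theorem,'' i.e.\ it simply conjoins the parallelism criteria for $D_1$, $D_2$ and $(\ker\pi_*)^\perp$ and appeals to the Ponge--Reckziegel characterization. So your first two paragraphs reproduce the intended argument faithfully, and you are right to flag that this is not automatic: the Ponge--Reckziegel statement recalled in the paper concerns \emph{two} complementary foliations, so a three-factor splitting requires controlling the mixed derivatives $\nabla^{M_1}_{E_1}E_2$ with $E_1\in\Gamma(D_1)$, $E_2\in\Gamma(D_2)$.

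However, the identity you offer to close this gap does not do the job. From $g_1(\nabla^{M_1}_{E_1}E_1',E_2)=-g_1(E_1',\nabla^{M_1}_{E_1}E_2)=0$ you only learn that $\nabla^{M_1}_{E_1}E_2$ has no component in $D_1$ (and, symmetrically, that $\nabla^{M_1}_{E_2}E_1$ has none in $D_2$). What is actually needed for $\ker\pi_*=D_1\oplus D_2$ to be a totally geodesic foliation --- and hence for the vertical factor to split off and then subdivide --- is that the \emph{horizontal} component $\mathcal{T}_{E_1}E_2$ vanish; neither your identity nor the parallelism of $(\ker\pi_*)^\perp$ (which constrains $\nabla_XY$ for horizontal $X,Y$, not $\nabla_{E_1}E_2$) says anything about it, and the identity $\mathcal{T}_{U}\xi=\phi U$ only forces $\mathcal{T}_{E_1}E_2\perp\xi$. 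This is a genuine issue rather than bookkeeping: three mutually orthogonal totally geodesic foliations spanning $TM$ do not in general yield a local triple product (the three coordinate line fields of a left-invariant metric on the Heisenberg group are all geodesic, yet the metric is not flat, so no splitting into three one-dimensional factors exists). So either an additional argument extracting the vanishing of the mixed $\mathcal{T}$-terms from the six displayed conditions must be supplied, or the sufficiency direction remains unproved --- a gap your proposal shares with, and indeed inherits from, the paper itself.
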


From Theorem \ref{yataypar} and Theorem \ref{dikeypar}, we have the following theorem;
\begin{theorem}
Let $\pi:(M_1,\varphi,\xi,\eta,g_1)\longrightarrow (M_2,g_2)$ be a semi-slant $\xi^\perp-$Riemannian submersion
from a Sasakian manifold $(M_1,\varphi,\xi,\eta,g_1)$ onto a Riemannian manifold $(M_2,g_2)$ with a semi-slant angle $\theta.$ Then the total space $M_1$ is a
locally (usual) product manifold of the leaves of $ker\pi_*$ and $(ker\pi_*)^\perp,$ i.e.,
$M_1=M_1{_{ker\pi_*}}\times M_1{_{(ker\pi_*)^\perp}},$ if and only if
\begin{align*}
g_{1}(V,\mathcal{V}\nabla^{^{M_1}}_{X}\phi \mathcal{B}Y+\mathcal{A}_{X}\omega\mathcal{B}Y)&=g_{2}(\pi_*(\mathcal{C}Y),(\nabla \pi_*)(X,\varphi V)),
\end{align*}
\begin{align*}
g_{1}(\mathcal{A}_{X}\omega W,\mathcal{B}Y)+\eta(Y)g_{1}(X,\omega W)&=g_{2}((\nabla \pi_*)(X,Y),\pi_*\omega\phi W)
-g_{2}((\nabla \pi_*)(X,\mathcal{C}Y),\pi_*\omega W)
\end{align*}
and
\begin{align*}
g_{1}(\omega V,\mathcal{T}_{U}\mathcal{B}X)+g_{1}(V,\phi U)\eta (X)&=g_{2}((\nabla \pi_*)(U,\mathcal{C}X),\pi_*\omega V)-g_{2}((\nabla \pi_*)(U,X),\pi_*\omega\phi V)
\end{align*}
for $X, Y\in \Gamma((\ker \pi_*)^{\bot}), U, V\in\Gamma(D_1)$ and $W\in\Gamma(D_{2}).$
\end{theorem}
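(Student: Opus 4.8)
The plan is to derive the statement as a direct consequence of the characterization of locally product Riemannian manifolds recalled from \cite{PR} just before the theorem, combined with Theorem \ref{yataypar} and Theorem \ref{dikeypar}. Since for a Riemannian submersion the two distributions $\ker\pi_*$ and $(\ker\pi_*)^\perp$ intersect perpendicularly at every point and together span $TM_1$, the metric $g_1$ is that of a usual product of the leaves of $\ker\pi_*$ and $(\ker\pi_*)^\perp$ if and only if both of these distributions define totally geodesic foliations, i.e.\ both are parallel with respect to $\nabla^{^{M_1}}$.

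First I would recall that $\ker\pi_*$ is automatically integrable (this is built into the O'Neill setup used throughout Section~2), so there is no integrability obstruction: only the two totally-geodesic (parallel) conditions have to be imposed. Next I would invoke Theorem \ref{yataypar}, which states that $(\ker\pi_*)^\perp$ is parallel if and only if (\ref{4s8}) and (\ref{4s9}) hold for $X,Y\in\Gamma((\ker\pi_*)^\perp)$, $V\in\Gamma(D_1)$ and $W\in\Gamma(D_2)$; these are precisely the first two displayed equations in the statement. Then I would invoke Theorem \ref{dikeypar}, which states that $\ker\pi_*$ is parallel if and only if (\ref{4s10}) holds for $U\in\Gamma(D_1)$, $V\in\Gamma(D_2)$ and $X\in\Gamma((\ker\pi_*)^\perp)$; this is the third displayed equation. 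Splicing the two equivalences together through the \cite{PR} criterion yields the claimed equivalence between the product decomposition $M_1=M_1{_{\ker\pi_*}}\times M_1{_{(\ker\pi_*)^\perp}}$ and the simultaneous validity of all three families of equations.

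There is no genuine obstacle here; the one bookkeeping point worth mentioning is that Theorem \ref{dikeypar} uses only the mixed case $U\in\Gamma(D_1)$, $V\in\Gamma(D_2)$, so in principle one must check that, together with the symmetry $\mathcal{T}_UV=\mathcal{T}_VU$ and the splitting $\ker\pi_*=D_1\oplus D_2$, this suffices to control $g_1(\nabla^{^{M_1}}_UV,X)$ for arbitrary vertical $U,V$; but this is exactly what is established inside the proof of Theorem \ref{dikeypar}, so here it is enough to cite it. Thus the theorem is an assembling corollary, and the proof consists in quoting Theorem \ref{yataypar}, Theorem \ref{dikeypar}, and the product characterization of \cite{PR}.
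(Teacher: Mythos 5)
Your proposal matches the paper's own treatment: the theorem is presented there as a direct corollary of Theorem \ref{yataypar}, Theorem \ref{dikeypar} and the Ponge--Reckziegel product characterization, exactly as you assemble it. Your extra bookkeeping remark about the mixed case $U\in\Gamma(D_1)$, $V\in\Gamma(D_2)$ in Theorem \ref{dikeypar} is a reasonable (and welcome) point of care, but the route is essentially identical to the paper's.
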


\section{Totally umbilical and Totally geodesicness of $\pi$}
In this section, we are going to examine the totally umbilical fibres and the totally geodesicness of a semi-slant $\xi^\perp-$ Riemannian submersion.
First we give a new condition for a semi-slant $\xi^\perp-$Riemannian submersion to be totally umbilical.
Let $\pi$ be a Riemannian submersion from a Riemannian manifold $(M_1,g_1)$ onto a Riemannian manifold $(M_2,g_2)$. $\pi$ is called a Riemannian
submersion with totally umbilical fibres if
\begin{align}
\mathcal{T}_{V}W=g_1(V,W)H,  \label{t1}
\end{align}
for $V, W\in\Gamma(ker\pi_*),$ where $H$ is mean curvature vector field of the
fibres \cite{sahin2}. Then we have the following result.

\begin{theorem}
Let $\pi:(M_1,\varphi,\xi,\eta,g_1)\longrightarrow (M_2,g_2)$ be a semi-slant $\xi^\perp-$Riemannian submersion with totally
umbilical fibres from a Sasakian manifold $(M_1,\varphi,\xi,\eta,g_1)$ onto a Riemannian manifold $(M_2,g_2),$ then $H\in\Gamma(\omega D_2).$
\end{theorem}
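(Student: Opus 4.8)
The plan is to exploit the totally umbilical condition $\mathcal{T}_V W = g_1(V,W)H$ together with the structure equations from Lemma (equations \eqref{s6}--\eqref{s7}) to pin down the component of $H$ along the invariant part of the horizontal distribution. Recall that by \eqref{s5} we have $(\ker\pi_*)^\perp = \omega D_2 \oplus \mu$, where $\mu$ is $\varphi$-invariant and contains $\xi$. So it suffices to show that $g_1(H,X)=0$ for every $X\in\Gamma(\mu)$, equivalently that $H$ has no component in $\mu$. Since $\mu$ decomposes further as $\varphi\mu \subseteq \mu$ plus the line spanned by $\xi$, I would treat the $\xi$-direction and the $\varphi$-invariant directions separately.

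\textbf{Step 1: the $\xi$-component.} First I would compute $g_1(H,\xi)$. Take any unit vertical $V\in\Gamma(\ker\pi_*)$. By the umbilical condition, $g_1(\mathcal{T}_V V,\xi) = g_1(V,V)\,g_1(H,\xi)$. On the other hand, since $\xi\in\Gamma((\ker\pi_*)^\perp)$, we have $g_1(\mathcal{T}_V V,\xi) = g_1(\nabla^{M_1}_V V,\xi) = -g_1(V,\nabla^{M_1}_V\xi) = g_1(V,\varphi V)$ by \eqref{s3}, and $g_1(V,\varphi V)=0$ since $\varphi$ is skew-symmetric. Hence $g_1(H,\xi)=0$.

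\textbf{Step 2: the $\mu$-component transverse to $\xi$.} Next, take $X\in\Gamma(\mu)$ with $X\perp\xi$; then $\varphi X\in\Gamma(\mu)$ as well. I would start from \eqref{s7}, namely
\begin{align*}
g_1(U,V)\xi+\mathcal{C}\mathcal{T}_{U}V+\omega\hat{\nabla}_{U}V&=\mathcal{T}_{U}\phi V+\mathcal{H}\nabla^{^{M_1}}_{U}\omega V,
\end{align*}
and pair it with $X$. Using the umbilical condition $\mathcal{T}_U\phi V = g_1(U,\phi V)H$ and $\mathcal{C}\mathcal{T}_U V = g_1(U,V)\mathcal{C}H$, and noting that $\omega\hat\nabla_U V$ lies in $\omega D_2\perp\mu$ while $\eta(X)=0$, the left side contributes only $g_1(U,V)g_1(\mathcal{C}H,X)$. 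Choosing $V=U$ with $U\in\Gamma(D_1)$ (so $\phi U = \varphi U$, $\omega U = 0$), the $\mathcal{T}_U\phi V$ term becomes $g_1(U,\varphi U)H=0$, and the remaining term $g_1(\mathcal{H}\nabla^{M_1}_U\omega U,X)=0$; this forces $g_1(U,U)g_1(\mathcal{C}H,X)=0$, hence $g_1(H,\varphi X)=-g_1(\mathcal{C}H,X)=0$ for all such $X$. Since $\varphi$ maps the $\xi$-transverse part of $\mu$ onto itself, this kills the entire $\mu$-component of $H$ except possibly the $\xi$-line, already handled in Step 1. Combining, $H\in\Gamma(\omega D_2)$.

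\textbf{The main obstacle} I anticipate is bookkeeping the decomposition carefully: one must be sure that the auxiliary terms ($\omega\hat\nabla_U V$, the $\eta(X)$ contributions, and the $\mathcal{H}\nabla^{M_1}$ pieces) genuinely drop out when paired against $\mu$, which requires using $\mu\perp\omega D_2$ and $\xi\in\mu$ consistently. A secondary subtlety is that $D_1$ must be nontrivial for the argument in Step 2 to directly give the $\varphi\mu$-component; if $D_1=\{0\}$ one should instead feed a $D_2$-vector into \eqref{s6}--\eqref{s7} and use Theorem~\ref{teo1} together with \eqref{s15}--\eqref{s16} to extract the same conclusion. Once those orthogonality reductions are in hand, the computation is short and essentially forced.
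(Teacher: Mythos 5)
Your proof is correct and follows essentially the same route as the paper's: both pair the Sasakian structure identity for $\nabla^{^{M_1}}_{U}V$ with $U,V\in\Gamma(D_1)$ (your equation (\ref{s7}) is exactly the identity the paper rederives in its proof) against a vector in $\mu$ and invoke the umbilical condition to conclude $g_1(H,Z)=0$ for all $Z\in\Gamma(\mu)$. The only differences are cosmetic --- the paper kills the unwanted terms by antisymmetrizing in $U_1,U_2$, whereas you set $V=U$ and treat the $\xi$-direction by a separate direct computation with $\nabla^{^{M_1}}_{V}\xi=-\varphi V$ --- and your explicit caveat that the argument needs $D_1\neq\{0\}$ flags a point the paper passes over in silence.
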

\begin{proof}
From (\ref{nvw}), we have
\begin{equation*}
\varphi\nabla^{^{M_1}}_{U_1}U_2=\varphi\mathcal{T}_{U_1}U_2+\varphi\hat{\nabla}_{U_1}U_2
\end{equation*}
for any $U_1,U_2\in\Gamma(D_{1}).$ Now, using (\ref{jv}) and (\ref{jx}), we get
\begin{equation*}
(\nabla^{^{M_1}}_{U_1}\phi )U_2-\nabla^{^{M_1}}_{U_1}\phi U_2=\mathcal{B}\mathcal{T}_{U_1}U_2+\mathcal{C}\mathcal{T}_{U_1}U_2+\phi \hat{\nabla}_{U_1}U_2+\omega \hat{\nabla}_{U_1}U_2.
\end{equation*}
By virtue of (\ref{s2}) and (\ref{s3}) imply that
\begin{equation*}
g_{1}(U_1,U_2)\xi-\eta(U_2)U_1-\mathcal{T}_{U_1}\phi U_2-\hat{\nabla}_{U_1}\phi U_2=\mathcal{B}\mathcal{T}_{U_1}U_2+\mathcal{C}\mathcal{T}_{U_1}U_2+\phi \hat{\nabla}_{U_1}U_2+\omega \hat{\nabla}_{U_1}U_2.
\end{equation*}%
Taking inner product in above equation with $Z\in\Gamma(\mu)$, we obtain
\begin{align*}
g_{1}(U_1,U_2)g_{1}(\xi,Z)=g_1(\mathcal{T}_{U_1}\phi U_2,Z)+g_{1}(\mathcal{C}\mathcal{T}_{U_1}U_2,Z)=g_1(\mathcal{T}_{U_1}\phi U_2,Z)-g_{1}(\mathcal{T}_{U_1}U_2,\varphi Z)
\end{align*}%
From (\ref{t1}), we have
\begin{equation}
g_{1}(U_1,U_2)g_{1}(\xi,Z)=g_{1}(U_1,\phi U_2)g_{1}(H,Z)-g_{1}(U_1,U_2)g_{1}(H,\varphi Z).  \label{t3}
\end{equation}%
Interchanging $U_1$ and $U_2$ in (\ref{t3}), we get
\begin{equation}
g_{1}(U_2,U_1)g_{1}(\xi,Z)=g_{1}(U_2,\phi U_1)g_{1}(H,Z)-g_{1}(U_2,U_1)g_{1}(H,\varphi Z).  \label{t4}
\end{equation}%
Subtracting (\ref{t3}) and (\ref{t4}), we get $g_{1}(H,Z)=0$ which shows that $H\in\Gamma(\omega D_{2}).$
\end{proof}

Now, we give some conditions for a semi-slant $\xi^\perp-$Riemannian submersion from a Sasakian manifold to be totally geodesic map.
Recall that a differential map $\pi$ between two Riemannian manifolds is called totally geodesic if $\nabla \pi_*=0$ \cite{BW}. It is known that the second fundamental form is symmetric.

\begin{theorem}
Let $\pi$ be a semi slant $\xi^\perp-$Riemannian submersion from a Sasakian manifold $(M_1,\varphi,\xi,\eta,g_1)$
onto a Riemannian manifold $(M_2,g_2)$ with a semi-slant angle $\theta.$ Then $\pi$ is a totally geodesic map if
\begin{align}
-\nabla^{\pi}_{X}\pi_{*}Z_2&=\pi_{*}(\mathcal{C}(\mathcal{H}\nabla^{^{M_1}}_{X}\omega Z_{1}-\mathcal{A}_{X}\phi Z_{1}+\mathcal{A}_{X}\mathcal{B}Z_{2}+\mathcal{H}\nabla^{^{M_1}}_{X}\mathcal{C}Z_2)\label{e.q:4.1}\\
&+\omega(\mathcal{A}_{X}\omega Z_{1}-\mathcal{V}\nabla^{^{M_1}}_{X}\phi Z_{1}
+\mathcal{V}\nabla^{^{M_1}}_{X}\mathcal{B}Z_{2}+\mathcal{A}_{X}\mathcal{C}Z_{2})\notag\\
&-\eta(Z_2)\mathcal{C}X-\eta(X)\eta(Z_2)-g_1(Y,\mathcal{C}X)\xi)\notag
\end{align}
for any $X\in\Gamma((ker\pi_*)^\perp)$ and $Z=Z_{1}+Z_{2}\in\Gamma(TM_1),$ where $Z_{1}\in\Gamma(ker\pi_{\ast})$
and $Z_{2}\in\Gamma((ker\pi_{\ast})^{\perp}).$
\end{theorem}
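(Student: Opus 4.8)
The plan is to compute $(\nabla\pi_*)(X,Z)$ for $X\in\Gamma((\ker\pi_*)^\perp)$ and $Z=Z_1+Z_2$ directly from the definition \eqref{npixy}, namely $(\nabla\pi_*)(X,Z)=\nabla^\pi_X\pi_*Z-\pi_*(\nabla^{M_1}_XZ)$, and to show that the stated hypothesis \eqref{e.q:4.1} forces this to vanish; since the second fundamental form is symmetric and $\pi_*$ annihilates vertical vectors, controlling $(\nabla\pi_*)(X,Z)$ for all $X$ horizontal and $Z$ arbitrary suffices to conclude $\nabla\pi_*=0$. First I would note that $\pi_*Z=\pi_*Z_2$, so $\nabla^\pi_X\pi_*Z=\nabla^\pi_X\pi_*Z_2$, and it remains to rewrite $\pi_*(\nabla^{M_1}_XZ)$ in a form that matches the right-hand side of \eqref{e.q:4.1}.

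The key computational step is to apply $\varphi$ and then the Sasakian identities \eqref{s2}, \eqref{s3} to express $\nabla^{M_1}_X Z$ in terms of $\varphi$-components. Concretely, I would write $\nabla^{M_1}_X Z = -\varphi\,\varphi\nabla^{M_1}_X Z + \eta(\nabla^{M_1}_X Z)\xi$, then use $\varphi\nabla^{M_1}_X Z = \nabla^{M_1}_X\varphi Z - (\nabla^{M_1}_X\varphi)Z = \nabla^{M_1}_X\varphi Z - g_1(X,Z)\xi+\eta(Z)X$. Next, decompose $\varphi Z_1 = \phi Z_1+\omega Z_1$ and $\varphi Z_2 = \mathcal{B}Z_2+\mathcal{C}Z_2$ via \eqref{jv}–\eqref{jx}, and expand each $\nabla^{M_1}_X(\cdot)$ using \eqref{nxv}–\eqref{nxy} (i.e. splitting into $\mathcal{A}_X$, $\mathcal{V}\nabla^{M_1}_X$, $\mathcal{H}\nabla^{M_1}_X$ pieces as appropriate to the vertical/horizontal nature of each term). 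Applying $\varphi$ once more and using \eqref{jv}–\eqref{jx} to split into $\phi,\omega,\mathcal{B},\mathcal{C}$ components, and then pushing forward by $\pi_*$ (which kills all vertical terms and all $\mathcal{B}$-, $\phi$-type terms), one collects exactly the horizontal combination appearing inside $\pi_*(\cdots)$ on the right of \eqref{e.q:4.1}: the $\mathcal{C}$ of the horizontal pieces $\mathcal{H}\nabla^{M_1}_X\omega Z_1-\mathcal{A}_X\phi Z_1+\mathcal{A}_X\mathcal{B}Z_2+\mathcal{H}\nabla^{M_1}_X\mathcal{C}Z_2$ together with $\omega$ applied to the vertical pieces $\mathcal{A}_X\omega Z_1-\mathcal{V}\nabla^{M_1}_X\phi Z_1+\mathcal{V}\nabla^{M_1}_X\mathcal{B}Z_2+\mathcal{A}_X\mathcal{C}Z_2$, plus the lower-order $\eta$- and $\xi$-terms coming from $(\nabla^{M_1}_X\varphi)Z$. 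Thus $\pi_*(\nabla^{M_1}_XZ)$ equals minus the right-hand side of \eqref{e.q:4.1}, so the hypothesis says precisely $\nabla^\pi_X\pi_*Z_2 = -\pi_*(\nabla^{M_1}_XZ)$, i.e. $(\nabla\pi_*)(X,Z)=0$.

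To finish, I would observe that $(\nabla\pi_*)(V,W)$ for $V,W\in\Gamma(\ker\pi_*)$ and $(\nabla\pi_*)(V,X)=(\nabla\pi_*)(X,V)$ for $V$ vertical are handled by the same mechanism (they are contained in the case $Z=Z_1$, resp. the symmetry of $\nabla\pi_*$), and by bilinearity over arbitrary tangent arguments $\nabla\pi_*$ vanishes identically, so $\pi$ is totally geodesic. The main obstacle I anticipate is purely bookkeeping: keeping the four projection operators $\phi,\omega,\mathcal{B},\mathcal{C}$ straight through two successive applications of $\varphi$ and correctly separating horizontal from vertical components at each stage, together with tracking the low-order terms generated by $(\nabla^{M_1}_X\varphi)Z=g_1(X,Z)\xi-\eta(Z)X$ so that the $\eta(Z_2)\mathcal{C}X$, $\eta(X)\eta(Z_2)$ and $g_1(Y,\mathcal{C}X)\xi$ contributions land with the correct signs; no conceptual difficulty is expected beyond this, since all the needed structural identities are already recorded in Lemmas above and in \eqref{s6}–\eqref{s11}.
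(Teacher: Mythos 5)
Your computation is essentially the paper's proof: the paper likewise starts from $\nabla^{^{M_1}}_{X}Z=\varphi(\nabla^{^{M_1}}_{X}\varphi)Z-\varphi\nabla^{^{M_1}}_{X}\varphi Z+\eta(\nabla^{^{M_1}}_{X}Z)\xi$, splits $\varphi Z_1=\phi Z_1+\omega Z_1$ and $\varphi Z_2=\mathcal{B}Z_2+\mathcal{C}Z_2$, expands with the O'Neill formulas \eqref{nxv}--\eqref{nxy}, applies $\varphi$ once more, and keeps only the parts surviving $\pi_*$, arriving at exactly the expression in \eqref{e.q:4.1}. So the core of your argument is the same route and is fine.

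One caveat about your closing paragraph: the claim that the component $(\nabla\pi_*)(V,W)$ for $V,W\in\Gamma(\ker\pi_*)$ is ``contained in the case $Z=Z_1$'' is not correct. In your computation the first argument $X$ is always horizontal; taking $Z=Z_1$ vertical only yields the mixed component $(\nabla\pi_*)(X,Z_1)$, which by symmetry also gives $(\nabla\pi_*)(Z_1,X)$, but it says nothing about the purely vertical component, which equals $-\pi_*(\mathcal{T}_VW)$ and is not constrained by hypothesis \eqref{e.q:4.1}. The paper's own proof has the same scope (it only verifies $(\nabla\pi_*)(X,Z)=0$ for horizontal $X$ and concludes there), so your main argument matches the paper; you should simply drop, or separately justify, the assertion that the vertical--vertical part follows from the same mechanism.
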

\begin{proof}
By virtue of (\ref{nxv}), (\ref{s2}) and (\ref{s3}), we have
\begin{align*}
\nabla^{^{M_1}}_{X}Z=\varphi(\nabla^{^{M_1}}_{X}\varphi)Z-\varphi\nabla^{^{M_1}}_{X}\varphi Z+\eta(\nabla^{^{M_1}}_{X}Z)\xi
\end{align*}
for any $Z\in\Gamma((ker\pi_*)^\perp)$ and $X\in\Gamma(TM_1)$. Now, from (\ref{npixy}), we arrive
\begin{align*}
(\nabla \pi_{*})(X,Z)&=\nabla^{\pi}_{X}\pi_{*}Z
+\pi_{*}(\varphi\nabla^{^{M_1}}_{X}\varphi Z-\varphi(\nabla^{^{M_1}}_{X}\varphi)Z-\eta(\nabla^{^{M_1}}_{X}Z)\xi)\\
&=\nabla^{\pi}_{X}\pi_{*}Z+\pi_{*}(\varphi(\nabla^{^{M_1}}_{X}\varphi Z_1+\nabla^{^{M_1}}_{X}\varphi Z_2)-\eta(Z)\varphi X-\eta(\nabla^{^{M_1}}_{X}Z)\xi).
\end{align*}
By using (\ref{nxv}), (\ref{nxy}), (\ref{jv}) and (\ref{jx}) we get
\begin{align*}
(\nabla \pi_{*})(X,Z)&=\nabla^{\pi}_{X}\pi_{*}Z_2+\pi_{*}(\mathcal{B}\mathcal{A}_{X}\phi Z_{1}+\mathcal{C}\mathcal{A}_{X}\phi Z_{1}+\phi\mathcal{V}\nabla^{^{M_1}}_{X}\phi Z_{1}
+\omega\mathcal{V}\nabla^{^{M_1}}_{X}\phi Z_{1}\\
&+\phi \mathcal{A}_{X}\omega Z_{1}+\omega \mathcal{A}_{X}\omega Z_{1}+\mathcal{B}\mathcal{H}\nabla^{^{M_1}}_{X}\omega Z_{1}
+\mathcal{C}\mathcal{H}\nabla^{^{M_1}}_{X}\omega Z_{1}\\
&+\mathcal{B}\mathcal{A}_{X}\mathcal{B}Z_{2}+\mathcal{C}\mathcal{A}_{X}\mathcal{B}Z_{2}+\phi\mathcal{V}\nabla^{^{M_1}}_{X}\mathcal{B}Z_{2}
+\omega\mathcal{V}\nabla^{^{M_1}}_{X}\mathcal{B}Z_{2}\\
&+\phi \mathcal{A}_{X}\mathcal{C}Z_{2}+\omega \mathcal{A}_{X}\mathcal{C}Z_{2}+\mathcal{B}\mathcal{H}\nabla^{^{M_1}}_{X}\mathcal{C}Z_{2}
+\mathcal{C}\mathcal{H}\nabla^{^{M_1}}_{X}\mathcal{C}Z_{2}\\
&-\eta(Z_2)\varphi X-\eta(X)\eta(Z_2)-g_1(Z_2,\mathcal{C}X)\xi)
\end{align*}
for any $Z=Z_{1}+Z_{2}\in\Gamma(TM_1)$, where $Z_{1}\in\Gamma(ker\pi_{\ast})$ and $Z_{2}\in\Gamma((ker\pi_{\ast})^{\perp}).$ Thus taking into account the vertical parts, we obtain
\begin{align*}
(\nabla \pi_{*})(X,Z)&=\nabla^{\pi}_{X}\pi_{*}Z_2+\pi_{*}(\mathcal{C}(\mathcal{A}_{X}\phi Z_{1}+\mathcal{H}\nabla^{^{M_1}}_{X}\omega Z_{1}
+\mathcal{A}_{X}\mathcal{B}Z_{2}+\mathcal{H}\nabla^{^{M_1}}_{X}\mathcal{C}Z_{2})\\
&+\omega(\mathcal{V}\nabla^{^{M_1}}_{X}\phi Z_{1}+\mathcal{A}_{X}\omega Z_{1}+\mathcal{V}\nabla^{^{M_1}}_{X}\mathcal{B}Z_{2}+\mathcal{A}_{X}\mathcal{C}Z_{2})\\
&-\eta(Z_2)\mathcal{C}X-\eta(X)\eta(Z_2)-g_1(Z_2,\mathcal{C}X)\xi),
\end{align*}
which gives our assertion.
\end{proof}

\begin{theorem}
Let $\pi$ be a semi slant $\xi^\perp-$Riemannian submersion from a Sasakian manifold $(M_1,\varphi,\xi,\eta,g_1)$
onto a Riemannian manifold $(M_2,g_2)$ with a semi-slant angle $\theta.$ Then $\pi$ is a totally geodesic map if and only if
\begin{enumerate}
\item[(i)]$\begin{aligned}[t]
g_{1}(\hat{\nabla}_{U_{1}}\varphi V_{1},\mathcal{B}Z)=g_{1}(\mathcal{T}_{U_{1}}\mathcal{C}Z,\varphi V_{1})-g_{1}(V_{1},\phi U_{1})\eta(Z),
\end{aligned}$
\item[(ii)] $\begin{aligned}[t]
g_{2}(\nabla\pi _*(U_{2},\omega \phi V_{2}),\pi_*Z)+g_{2}(\nabla \pi_*(U_{2},\omega V_{2}),\pi_*Z)&=g_{1}(\mathcal{T}_{U_{2}}\omega V_{2},\mathcal{B}Z)+g_{1}(V_{2},\phi U_{2})\eta (Z)
\end{aligned}$
\item[(iii)]$\begin{aligned}[t]
g_{2}(\nabla\pi_*(U,\mathcal{C}X),\pi_*\mathcal{C}Y)-g_{2}(\nabla\pi_*(U,\omega\mathcal{B}X),\pi_*Y)&=g_{1}(\mathcal{T}_{U}\phi \mathcal{B}X,Y)-g_{1}(\mathcal{T}_{U}\mathcal{C}X,\mathcal{B}Y) \\
&\!\!+\!\!\eta(X)g_{1}(QU,\varphi Y)\!\!-\!\!\eta(Y)[U\eta(X)\!\!+\!\!g_{1}(X,\omega U)]
\end{aligned}$
\end{enumerate}
for any $U_1, V_1\in\Gamma(D_1),\ U_2,V_2\in\Gamma(D_2),\ U\in\Gamma(ker\pi_*)$ and $X, Y, Z\in\Gamma((ker\pi_{*})^\perp)$.
\end{theorem}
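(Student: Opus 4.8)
The plan is to test the condition $\nabla\pi_{*}=0$ componentwise with respect to the splitting $TM_{1}=D_{1}\oplus D_{2}\oplus(\ker\pi_{*})^{\perp}$. Since the second fundamental form $\nabla\pi_{*}$ is symmetric, and since for a Riemannian submersion $(\nabla\pi_{*})(X,Y)=0$ whenever $X,Y\in\Gamma((\ker\pi_{*})^{\perp})$ — because $\mathcal{H}\nabla^{^{M_{1}}}_{X}Y$ is $\pi$-related to $\nabla^{^{M_{2}}}_{\bar{X}}\bar{Y}$ while $\pi_{*}\mathcal{A}_{X}Y=0$, and $\nabla\pi_{*}$ is tensorial — it suffices to show that $\nabla\pi_{*}$ vanishes on $D_{1}\times D_{1}$, on $D_{2}\times D_{2}$, on $D_{1}\times D_{2}$, and on $(\ker\pi_{*})\times(\ker\pi_{*})^{\perp}$. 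I would identify the first of these with (i), the second with (ii), the fourth with (iii), and argue that the mixed term $D_{1}\times D_{2}$ imposes nothing new.

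First I would treat $D_{1}\times D_{1}$. For $U_{1},V_{1}\in\Gamma(D_{1})$ one has $(\nabla\pi_{*})(U_{1},V_{1})=-\pi_{*}(\mathcal{H}\nabla^{^{M_{1}}}_{U_{1}}V_{1})$, so this term vanishes iff $g_{1}(\nabla^{^{M_{1}}}_{U_{1}}V_{1},Z)=0$ for all $Z\in\Gamma((\ker\pi_{*})^{\perp})$. Using $\varphi(D_{1})=D_{1}$ together with (\ref{s2}) and (\ref{s3}) one rewrites $\varphi\nabla^{^{M_{1}}}_{U_{1}}V_{1}=\nabla^{^{M_{1}}}_{U_{1}}\varphi V_{1}-g_{1}(U_{1},V_{1})\xi$ and $\eta(\nabla^{^{M_{1}}}_{U_{1}}V_{1})=g_{1}(\phi U_{1},V_{1})$, so that, since $g_{1}(\xi,\varphi Z)=0$, one gets $g_{1}(\nabla^{^{M_{1}}}_{U_{1}}V_{1},Z)=g_{1}(\nabla^{^{M_{1}}}_{U_{1}}\varphi V_{1},\varphi Z)+g_{1}(\phi U_{1},V_{1})\eta(Z)$. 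Expanding $\varphi Z=\mathcal{B}Z+\mathcal{C}Z$ via (\ref{jx}) and $\nabla^{^{M_{1}}}_{U_{1}}\varphi V_{1}=\mathcal{T}_{U_{1}}\varphi V_{1}+\hat{\nabla}_{U_{1}}\varphi V_{1}$ via (\ref{nvw}), and using the skew-symmetry of the operator $\mathcal{T}_{U_{1}}$, the resulting expression collapses exactly to the equation in (i).

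The $D_{2}\times D_{2}$ case begins the same way but now $\varphi V_{2}=\phi V_{2}+\omega V_{2}$ has a nonzero horizontal part, so after moving $\varphi$ past $\nabla^{^{M_{1}}}$ one substitutes $\varphi V_{2}$, splits $\nabla^{^{M_{1}}}_{U_{2}}\phi V_{2}$ and $\nabla^{^{M_{1}}}_{U_{2}}\omega V_{2}$ using (\ref{nvw})--(\ref{nvx}), and routes the horizontal derivatives through $\pi_{*}$ via (\ref{npixy}). A second application of $\varphi$ brings in $\phi^{2}V_{2}=-\cos^{2}\theta\,V_{2}$ by Theorem \ref{teo1}; this is precisely what lets the $V_{2}$-terms be gathered under a $\sin^{2}\theta$ factor — the same device used in Theorems \ref{d1par}--\ref{dikeypar} — and isolating that coefficient gives (ii). For $(\ker\pi_{*})\times(\ker\pi_{*})^{\perp}$, take $U\in\Gamma(\ker\pi_{*})$ and $Y\in\Gamma((\ker\pi_{*})^{\perp})$; then $(\nabla\pi_{*})(U,Y)=-\pi_{*}\nabla^{^{M_{1}}}_{U}Y$, and from (\ref{s2})--(\ref{s3}) one has $\varphi\nabla^{^{M_{1}}}_{U}Y=\nabla^{^{M_{1}}}_{U}\varphi Y+\eta(Y)U$, hence $\nabla^{^{M_{1}}}_{U}Y=-\varphi\nabla^{^{M_{1}}}_{U}\varphi Y-\eta(Y)\varphi U+\eta(\nabla^{^{M_{1}}}_{U}Y)\xi$. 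Writing $\varphi Y=\mathcal{B}Y+\mathcal{C}Y$, expanding $\nabla^{^{M_{1}}}_{U}\mathcal{B}Y$ and $\nabla^{^{M_{1}}}_{U}\mathcal{C}Y$ by (\ref{nvw})--(\ref{nvx}), applying $\varphi$ once more so that $\phi$ acts on the $D_{2}$-part of $\mathcal{B}Y$ (where $\phi^{2}=-\cos^{2}\theta\,\mathrm{id}$ applies), converting the horizontal covariant derivatives into $\nabla\pi_{*}$ terms by (\ref{npixy}), and pairing against $\pi_{*}Z$ for $Z\in\Gamma((\ker\pi_{*})^{\perp})$, one again produces a $\sin^{2}\theta$-weighted identity which, after simplification with the symmetry properties of $\mathcal{T}$, is exactly (iii).

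The hard part is not conceptual but \emph{combinatorial}: in the $D_{2}$ computations every application of $\varphi$ remixes the vertical and horizontal components, and one must repeatedly invoke $\phi^{2}=-\cos^{2}\theta\,\mathrm{id}$ on $D_{2}$, the relations $\phi^{2}+\mathcal{B}\omega=-\mathrm{id}$, $\mathcal{C}^{2}+\omega\mathcal{B}=-\mathrm{id}$, $\omega\phi+\mathcal{C}\omega=0$, $\mathcal{B}\mathcal{C}+\phi\mathcal{B}=0$, and the skew/symmetry properties of the O'Neill tensors $\mathcal{T}$ and $\mathcal{A}$, in order to see that everything condenses into the three stated equations. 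The one point needing a little care is the mixed $D_{1}\times D_{2}$ term: there the $\eta$-contributions drop out because $g_{1}(\phi U_{1},V_{2})=0$, and the same manipulation shows the resulting relation is already a consequence of (i) and (iii), so no fourth condition is produced.
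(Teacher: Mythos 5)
Your overall strategy is the paper's: for each block of $TM_1=D_1\oplus D_2\oplus(\ker\pi_*)^{\perp}$ you reduce $g_2((\nabla\pi_*)(\cdot,\cdot),\pi_*Z)$ to $-g_1(\nabla^{^{M_1}}_{\cdot}\cdot,Z)$, insert $\varphi$ twice through (\ref{s2})--(\ref{s3}), split with $\phi,\omega,\mathcal{B},\mathcal{C}$ via (\ref{jv})--(\ref{jx}), and use (\ref{nvw})--(\ref{nxy}), (\ref{npixy}) and $\phi^2=-\cos^2\theta\,\mathrm{id}$ on $D_2$; your derivations of (i) and (ii) coincide with the paper's. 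You also correctly record, as the paper does not bother to, that $(\nabla\pi_*)$ vanishes automatically on horizontal pairs. But your handling of the mixed vertical block is a genuine gap. For $U_1\in\Gamma(D_1)$ and $W\in\Gamma(D_2)$ one has $(\nabla\pi_*)(U_1,W)=-\pi_*(\mathcal{T}_{U_1}W)$, and neither (i) (which constrains only $D_1\times D_1$) nor (iii) (which has one vertical and one horizontal argument) says anything about $\mathcal{T}_{U_1}W$. Symmetry and bilinearity of $\mathcal{T}$ do not let you polarize from the pure cases: $\mathcal{T}_{U}U=0$ for all $U\in D_1$ and all $U\in D_2$ does not force $\mathcal{T}_{U_1}W=0$, because $U_1+W$ lies in neither distribution. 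So the assertion that the $D_1\times D_2$ term ``is already a consequence of (i) and (iii)'' is unsubstantiated and, as far as I can see, false; it needs either a proof or a fourth condition. (The paper's own proof silently omits this block too, so the sufficiency direction is incomplete there as well, but you made a positive claim where the paper made none.)

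A second, smaller mismatch: your proposed route to (iii) will not land on the displayed formula. You plan to apply $\varphi$ a second time so that $\phi^2=-\cos^2\theta\,\mathrm{id}$ acts on $\mathcal{B}X\in\Gamma(D_2)$ and then to extract a $\sin^2\theta$ factor, as in Theorems \ref{d1par}--\ref{dikeypar}. But the stated identity (iii) carries no slant-angle factor and still contains the unreduced term $g_1(\mathcal{T}_{U}\phi\mathcal{B}X,Y)$; the paper obtains it by a single pass, keeping $\nabla^{^{M_1}}_{U}\phi\mathcal{B}X$ and $\nabla^{^{M_1}}_{U}\omega\mathcal{B}X$ intact and converting only the horizontal derivatives through (\ref{npixy}). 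Your manipulation would produce an equivalent but differently organized condition, so as written the argument does not reproduce (iii).
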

\begin{proof}
(i) Given  $U_1,V_1\in\Gamma(D_1)$ and $Z\in\Gamma((ker\pi_{*})^\perp)$, from (\ref{npixy}), (\ref{s2}) and (\ref{s3}), we have
\begin{align*}
g_{2}((\nabla \pi _{\ast })(U_{1},V_{1}),\pi _{\ast }Z)&=-g_{1}(\varphi\nabla^{^{M_1}}_{U_{1}}V_{1},\varphi Z)-\eta (\nabla^{^{M_1}}_{U_{1}}V_{1})\eta(Z).
\end{align*}
By using (\ref{nxv}) and (\ref{s3}), we obtain
\begin{align*}
g_{2}((\nabla\pi_*)(U_{1},V_{1}),\pi_*Z)&=-g_{1}(\nabla^{^{M_1}}_{U_{1}}\varphi V_{1},\mathcal{B}Z)-g_{1}(\nabla^{^{M_1}}_{U_{1}}\varphi
V_{1},\mathcal{C}Z)-g_{1}(V_{1},\phi U_{1})\eta(Z).
\end{align*}
From (\ref{nvw}), we get
\begin{align*}
g_{2}((\nabla\pi_*)(U_{1},V_{1}),\pi_*Z)&=-g_{1}(\hat{\nabla}_{U_{1}}\varphi
V_{1},\mathcal{B}Z)-g_{1}(\mathcal{T}_{U_{1}}\varphi V_{1},\mathcal{C}Z)-g_{1}(V_{1},\phi U_{1})\eta(Z)
\end{align*}
which gives (i).

(ii) By virtue of (\ref{npixy}), (\ref{s2}) and (\ref{s3}), we get
\begin{align*}
g_{2}((\nabla\pi_*)(U_{2},V_{2}),\pi_*Z)&=-g_{1}(\nabla^{^{M_1}}_{U_{2}}\varphi V_{2},\varphi Z)-\eta (\nabla^{^{M_1}}_{U_{2}}V_{2})\eta(Z)
\end{align*}
for $U_{2},V_{2}\in \Gamma (D_{2}).$ Then using (\ref{jv}), (\ref{jx}) and (\ref{s3}), we obtain
\begin{align*}
g_{2}((\nabla\pi_*)(U_{2},V_{2}),\pi_*Z) &=-g_{1}(\nabla^{^{M_1}}_{U_{2}}\phi V_{2},\varphi Z)-g_{1}(\nabla^{^{M_1}}_{U_{2}}\omega V_{2},\mathcal{B}Z)\\
&-g_{1}(\nabla^{^{M_1}}_{U_{2}}\omega V_{2},\mathcal{C}Z)-g_{1}(V_{2},\phi U_{2})\eta(Z).
\end{align*}
Taking into account that $\pi$ is a semi-slant $\xi^\perp-$Riemannian submersion, using (\ref{nvw}) imply that
\begin{align*}
g_{2}((\nabla\pi_*)(U_{2},V_{2}),\pi_*Z)&=g_{1}(\nabla^{^{M_1}}_{U_{2}}\phi^{2}V_{2},Z)+g_{1}(\pi_*(\mathcal{H}\nabla^{^{M_1}}_{U_{2}}\omega\phi V_{2}),\pi_*Z)-g_{1}(\mathcal{T}_{U_{2}}\omega V_{2},\mathcal{B}Z) \\
&-g_{2}(\pi_*(\mathcal{H}\nabla^{^{M_1}}_{U_{2}}\omega V_{2}),\pi_*(\mathcal{C}Z))-g_{1}(V_{2},\phi U_{2})\eta(Z).
\end{align*}
Now, from (\ref{npixy}) and (\ref{phiv}), we obtain
\begin{align*}
g_{2}((\nabla\pi_*)(U_{2},V_{2}),\pi_*Z)&=\cos^{2}\theta g_{2}((\nabla\pi_*)(U_{2},V_{2}),\pi_*Z)+g_{2}(\nabla\pi_*(U_{2},\omega\phi V_{2}),\pi_*Z)-g_{1}(\mathcal{T}_{U_{2}}\omega V_{2},\mathcal{B}Z) \\
&+g_{2}(\nabla\pi_*(U_{2},\omega V_{2}),\pi_*(\mathcal{C}Z))-g_{1}(V_{2},\phi U_{2})\eta(Z),
\end{align*}%
so with some elementary calculations, we arrive
\begin{align*}
\sin ^{2}\theta g_{2}((\nabla\pi_*)(U_{2},V_{2}),\pi_*Z)&=g_{2}(\nabla\pi_*(U_{2},\omega\phi V_{2}),\pi_*Z)
-g_{1}(\mathcal{T}_{U_{2}}\omega V_{2},\mathcal{B}Z) \\
&+g_{2}(\nabla\pi_*(U_{2},\omega V_{2}),\pi_*(\mathcal{C}Z))-g_{1}(V_{2},\phi U_{2})\eta(Z)
\end{align*}%
which completes (ii).

(iii) If $X,Y\in \Gamma ((\ker \pi _{\ast })^{\bot }),$ $U\in \Gamma (\ker \pi_{\ast })$, then by using (\ref{npixy}), (\ref{s2}) and (\ref{s3}), we have
\begin{align*}
g_{2}((\nabla\pi_*)(U,X),\pi_*Y)&=-g_{1}(\varphi\nabla^{^{M_1}}_{U}X,\varphi Y)-\eta(\nabla^{^{M_1}}_{U}X)\eta(Y).
\end{align*}
Using (\ref{s2}), (\ref{jv}) and (\ref{jx}), we get
\begin{align*}
g_{2}((\nabla\pi_*)(U,X),\pi_*Y)&=-g_{1}(\nabla^{^{M_1}}_{U}\varphi X,\varphi Y)+\eta(X)g_{1}(U,\mathcal{C}Y)-U\eta(X)\eta(Y)-g_{1}(X,\omega U)\eta (Y).
\end{align*}
By virtue of (\ref{nvx}), (\ref{u}), (\ref{jv}) and (\ref{jx}), we obtain
\begin{align*}
g_{2}((\nabla\pi_*)(U,X),\pi_*Y)&=g_{1}(\nabla^{^{M_1}}_{U}\phi \mathcal{B}X,Y)+g_{1}(\nabla^{^{M_1}}_{U}\omega\mathcal{B}X,Y)-g_{1}(\mathcal{T}_{U}\mathcal{C}X,\mathcal{B}Y)
-g_{2}(\pi_*(\mathcal{H}\nabla^{^{M_1}}_{U}\mathcal{C}X),\pi_*\mathcal{C}Y)\\
&+\eta(X)g_{1}(QU,\varphi Y)-U\eta(X)\eta(Y)-g_{1}(X,\omega U)\eta(Y).
\end{align*}%
By (\ref{npixy}) and (\ref{nvw}), we get
\begin{align*}
g_{2}((\nabla \pi _{\ast })(U,X),\pi _{\ast }Y) &=g_{1}(\mathcal{T}_{U}\phi BX,Y)+g_{2}((\nabla\pi_*)(U,\omega\mathcal{B}X),\pi_*Y)
-g_{1}(\mathcal{T}_{U}\mathcal{C}X,\mathcal{B}Y)\\
&+g_{2}((\nabla\pi_*)(U,\mathcal{C}X),\pi_*\mathcal{C}Y)+\eta(X)g_{1}(QU,\varphi Y)\\
&-U\eta(X)\eta(Y)-g_{1}(X,\omega U)\eta(Y).
\end{align*}%
From the above equation, we obtain (iii).
\end{proof}

In a similar way, we obtain the following lemma:

\begin{theorem}
Let $\pi$ be a semi slant $\xi^\perp-$Riemannian submersion from a Sasakian manifold $(M_1,\varphi,\xi,\eta,g_1)$
onto a Riemannian manifold $(M_2,g_2)$ with a semi-slant angle $\theta.$ Then $\pi$ is a totally geodesic map if and only if
\begin{enumerate}
\item [(i)] $\mathcal{C}(\mathcal{T}_{U}\phi V+\nabla^{^{M_1}}_{U}\omega V)+\omega(\hat{\nabla}_{U}\phi V+\mathcal{T}_{U}\omega V)
+g_1(\mathcal{P}V,\phi U)\xi=0.$
\item [(ii)] $\mathcal{C}(\mathcal{A}_{X}\phi U+\mathcal{H}\nabla^{^{M_1}}_{X}\omega U)+\omega(\mathcal{A}_{X}\omega U+\mathcal{V}\nabla^{^{M_1}}_{X}\phi U)+g_1(QU,\mathcal{B}X)\xi=0.$
\item [(iii)] $\mathcal{C}(\mathcal{T}_{U_1}\phi V_1+\mathcal{H}\nabla^{^{M_1}}_{U_1}\phi V_1)+\omega(\mathcal{T}_{U_1}\omega V_1+\mathcal{V}\nabla^{^{M_1}}_{U_1}\phi V_1)=0,$
\end{enumerate}
for $U_1\in\Gamma(D_1),\ V_1\in\Gamma(D_2),\ U,V\in\Gamma(ker\pi_*)$ and $X\in\Gamma((ker\pi_{*})^\perp)$.
\end{theorem}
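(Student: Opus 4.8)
The plan is to use the standard characterization that $\pi$ is a totally geodesic map precisely when its second fundamental form vanishes, i.e. $(\nabla\pi_*)(E,F)=0$ for all $E,F\in\Gamma(TM_1)$. Since $\nabla\pi_*$ is symmetric and $C^\infty(M_1)$-bilinear and $TM_1=D_1\oplus D_2\oplus(\ker\pi_*)^\perp$, it suffices to test it on pairs of vector fields each lying in one of these three summands. For a Riemannian submersion the restriction to pairs of horizontal fields vanishes automatically (extend to basic fields and use that $\mathcal{H}\nabla^{^{M_1}}_XY$ is $\pi$-related to $\nabla^{^{M_2}}_{\bar X}\bar Y$), so only the vertical--vertical and the vertical--horizontal pairs produce genuine conditions; these give (i) and (iii), and (ii), respectively. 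Moreover $\pi_*$ is a linear isometry on $(\ker\pi_*)^\perp$, so in every case $(\nabla\pi_*)(E,F)=-\pi_*(\mathcal{H}\nabla^{^{M_1}}_EF)$ vanishes if and only if $\mathcal{H}\nabla^{^{M_1}}_EF=0$; thus each condition is an \emph{identity} ``$\mathcal{H}\nabla^{^{M_1}}_EF=0\ \Leftrightarrow\ (\text{explicit expression})=0$'', and no separate ``only if'' argument is needed.

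To carry this out for a vertical--vertical pair $U,V\in\Gamma(\ker\pi_*)$, I would compute $\mathcal{H}\nabla^{^{M_1}}_UV$ as follows: apply $\varphi$, use the Sasakian identity $(\nabla^{^{M_1}}_U\varphi)V=g_1(U,V)\xi-\eta(V)U$ (here $\eta(V)=0$ since $\xi$ is horizontal) together with $\varphi V=\phi V+\omega V$; expand $\nabla^{^{M_1}}_U\phi V$ and $\nabla^{^{M_1}}_U\omega V$ via \eqref{nvw} and \eqref{nvx}; apply $\varphi$ once more and use $\varphi^2=-I+\eta\otimes\xi$ to solve for $\nabla^{^{M_1}}_UV$; finally read off the $(\ker\pi_*)^\perp$--component, splitting each $\varphi(\cdot)$ into its $\phi/\omega$ or $\mathcal{B}/\mathcal{C}$ pieces according as the argument is vertical or horizontal. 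The $\xi$--term arises from $\eta(\nabla^{^{M_1}}_UV)=g_1(V,\varphi U)$, which follows from $\nabla^{^{M_1}}_U\xi=-\varphi U$; this produces (i). Specialising to $U\in\Gamma(D_1)$, $V\in\Gamma(D_2)$ (so that $\omega D_1=0$, the $\mathcal{P}V$--term drops, and $g_1(V,\varphi U)=0$), and differentiating instead along $D_2$ with $\varphi U=\phi U$, yields the cleaner identity (iii). For a vertical--horizontal pair I would exploit symmetry, $(\nabla\pi_*)(U,X)=(\nabla\pi_*)(X,U)=-\pi_*(\mathcal{H}\nabla^{^{M_1}}_XU)$, so that differentiation runs in the horizontal direction and no basic extension is needed; the same scheme, now with \eqref{nxv} and \eqref{nxy} so that the horizontal O'Neill tensor $\mathcal{A}$ appears, gives (ii), whose $\xi$--term is $\eta(\nabla^{^{M_1}}_XU)=g_1(U,\varphi X)=g_1(\mathcal{Q}U,\mathcal{B}X)$ since $\mathcal{B}(\ker\pi_*)^\perp=D_2$. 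Wherever a $D_2$--component has to be ``unwound'' I would use $\phi^2W=-\cos^2\theta\,W$ from Theorem \ref{teo1} and divide out the resulting factor $\sin^2\theta$.

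The only real work is the bookkeeping: each of the three cases produces on the order of a dozen terms, and after every application of $\varphi$ one must split correctly into tangential/normal parts ($\phi/\omega$ or $\mathcal{B}/\mathcal{C}$), while after every Levi-Civita derivative one must split correctly via $\mathcal{T},\hat\nabla$ or $\mathcal{A},\mathcal{V}\nabla,\mathcal{H}\nabla$; a single misplaced sign or projection is fatal. I expect no conceptual obstacle — the argument is the exact analogue of the preceding theorem, which is why it can be handled ``in a similar way''.
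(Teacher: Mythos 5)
Your proposal is correct and is essentially the argument the paper intends: the paper omits this proof (``in a similar way''), and the preceding theorem's proof is exactly your scheme --- apply $\varphi$, use the Sasakian identity together with $\varphi^{2}=-I+\eta\otimes\xi$ to solve for $\nabla^{^{M_1}}_{E}F$, expand with the O'Neill tensors, split each image of $\varphi$ into $\phi/\omega$ or $\mathcal{B}/\mathcal{C}$ parts, and read off the horizontal component, with $\eta(\nabla^{^{M_1}}_{E}F)$ supplying the $\xi$-term via $\nabla^{^{M_1}}_{E}\xi=-\varphi E$. The only inessential point is that the $\phi^{2}W=-\cos^{2}\theta\,W$ unwinding and the division by $\sin^{2}\theta$ are not actually needed for this particular statement, since the conditions here are expressed directly through $\mathcal{C}$ and $\omega$ rather than after pairing against a $D_{2}$-vector.
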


\section{Examples}

\begin{example}
Every invariant submersion from a Sasakian manifold to
a Riemannian manifold is a semi-slant $\xi^\perp-$Riemannian submersion with $D_2=\{0\}$ and $\theta={0}$.
\end{example}

\begin{example}
Every slant Riemannian submersion from a Sasakian manifold to
a Riemannian manifold is a semi-slant $\xi^\perp-$Riemannian submersion with $D_1=\{0\}$.
\end{example}

Now, we construct some non-trivial examples of semi-slant $\xi^\perp-$Riemannian submersion from a Sasakian manifold.
Let $(\mathbb{R}^{2n+1},g,\varphi ,\xi ,\eta)$ denote the manifold $\mathbb{R}^{2n+1}$ with its usual Sasakian structure given by
\[
\varphi (\sum_{i=1}^{n}(X_{i}\frac{\partial }{\partial x^{i}}+Y_{i}\frac{%
\partial }{\partial y^{i}})+Z\frac{\partial }{\partial z})=%
\sum_{i=1}^{n}(Y_{i}\frac{\partial }{\partial x^{i}}-X_{i}\frac{\partial }{%
\partial y^{i}})
\]%
\[
g=\eta \otimes \eta +\frac{1}{4}\sum_{i=1}^{n}(dx^{i}\otimes
dx^{i}+dy^{i}\otimes dy^{i}),
\]%
\[
\eta =\frac{1}{2}(dz-\sum_{i=1}^{n}y^{i}dx^{i}),\,\,\,\xi =2\frac{\partial }{%
\partial z},
\]%
where $(x^{1},...,x^{n},y^{1},...,y^{n},z)$ are the Cartesian coordinates.
Throughout this section, we will use this notation.
\begin{example}
\label{exm1} Let $F$ be a submersion defined by
\begin{equation*}
\begin{array}{cccc}
F: & \mathbb{R}^{9} & \longrightarrow  & \mathbb{R}^{5} \\
& (x_{1},x_{2},x_{3},x_{4},y_{1},y_{2},y_{3},y_{4},z) &  & (\frac{x_{1}+x_{2}%
}{\sqrt{2}},\frac{y_{1}+y_{2}}{\sqrt{2}},sin\alpha x_{3}-cos\alpha x_{4},y_{4},z)%
\end{array}%
\end{equation*}%
with $\alpha \in (0,\frac{\pi }{2}).$ Then it follows that
\begin{align*}
kerF_{\ast }=span\{& Z_{1}=\frac{\partial}{\partial x^{1}}-\frac{\partial}{%
\partial x^{2}},\ Z_{2}=\frac{\partial}{\partial y^{1}}-\frac{\partial}{%
\partial y^{2}}, \\
& Z_{3}=-\cos\alpha\frac{\partial}{\partial x^{3}}-\sin\alpha\frac{\partial}{\partial x^{4}}%
,Z_{4}=\frac{\partial}{\partial y^{3}}\}
\end{align*}%
and
\begin{align*}
(kerF_{\ast })^{\perp }=span\{& H_{1}=\frac{\partial}{\partial x^{1}}+\frac{%
\partial}{\partial x^{2}},\ H_{2}=\frac{\partial }{\partial y^{1}}+\frac{%
\partial}{\partial y^{2}},\ H_{3}=\sin\alpha\frac{\partial }{\partial x^{3}}-\cos\alpha\frac{%
\partial}{\partial x^{4}}, \\
& H_{4}=\frac{\partial}{\partial y^{4}},\ H_{5}=\frac{\partial}{\partial z}=\xi\}.
\end{align*}%
Hence we have $\varphi Z_{1}=-Z_{2}$, $\varphi Z_{2}=Z_{1}$. Thus it follows that
$D_{1}=span\{Z_{1},Z_{2}\}$ and $D_{2}=span\{Z_{3},Z_{4}\}$ is a slant distribution with slant angle $\theta=\alpha.$
Thus $F$ is a semi-slant submersion with semi-slant angle $\theta.$ Also by direct computations, we obtain
\begin{equation*}
g_{2}(F_*H_{1},F_*H_{1})=g_{1}(H_{1},H_{1}),\ g_{2}(F_*H_{2},F_*H_{2})=g_{1}(H_{2},H_{2}),
\end{equation*}%
\begin{equation*}
g_{2}(F_{\ast }H_{3},F_{\ast }H_{3})=g_{1}(H_{3},H_{3}),\ g_{2}(F_{\ast
}H_{4},F_{\ast }H_{4})=g_{1}(H_{4},H_{4}),\ g_{2}(F_{\ast}\xi ,F_{\ast}\xi)=g_{1}(\xi ,\xi )
\end{equation*}%
where $g_{1}$ and $g_{2}$ denote the standard metrics (inner products) of $%
\mathbb{R}^{9}$ and $\mathbb{R}^{5}$. Thus $F$ is a semi-slant $\xi^{\perp}-$Riemannian submersion.
\end{example}

\begin{example}
Let $F$ be a submersion defined by
\begin{equation*}
\begin{array}{cccc}
F: & \mathbb{R}^{7} & \longrightarrow  & \mathbb{R}^{3} \\
& (x_{1},x_{2},x_{3},y_{1},y_{2},y_{3},z) &  & (\frac{x_2-y_3}{\sqrt{2}},y_{2},z).%
\end{array}%
\end{equation*}%
Then the submersion $F$ is a semi-slant $\xi^\perp-$ Riemannian submersion such that
$D_1=<\frac{\partial}{\partial x_{1}},\frac{\partial}{\partial y_{1}}>$ and
$D_2=<\frac{\partial}{\partial x_{2}}+\frac{\partial}{\partial y_{3}},\frac{\partial}{\partial x_{3}}>$
with semi-slant angle $\alpha =\frac{\pi }{4}.$
\end{example}

\begin{example}
Let $F$ be a submersion defined by
\begin{equation*}
\begin{array}{cccc}
F: & \mathbb{R}^{9} & \longrightarrow  & \mathbb{R}^{3} \\
& (x_{1},x_{2},x_{3},x_{4},y_{1},y_{2},y_{3},y_{4},z) &  & (sin\alpha x_{3}-cos\alpha x_{4},y_{4},z)%
\end{array}%
\end{equation*}%
with $\alpha \in (0,\frac{\pi }{2}).$ Then the submersion $F$ is a semi-slant $\xi^\perp-$ Riemannian submersion such that
$D_1=<\frac{\partial}{\partial x_{1}},\frac{\partial}{\partial x_{2}},%
\frac{\partial}{\partial y_{1}},\frac{\partial}{\partial y_{2}}>$ and
$D_2=<-\cos\alpha\frac{\partial}{\partial x_{3}}-\sin\alpha
\frac{\partial}{\partial x_{4}},\frac{\partial}{\partial y_{3}}>$
with semi-slant angle $\theta=\alpha.$
\end{example}

\begin{example}
Let $F$ be a submersion defined by
\begin{equation*}
\begin{array}{cccc}
F: & \mathbb{R}^{13} & \longrightarrow  & \mathbb{R}^{7} \\
& (x_{1},x_{2},x_{3},x_{4},x_{5},x_{6},y_{1},y_{2},y_{3},y_{4},y_{5},y_{6},z) &  & (\frac{x_1-x_2}{\sqrt{2}},\frac{y_1-y_2}{\sqrt{2}},\frac{x_3+x_4}{\sqrt{2}},\frac{y_3+y_4}{\sqrt{2}},
\frac{x_5-x_6}{\sqrt{2}},y_{5},z).%
\end{array}%
\end{equation*}%
Then the submersion $F$ is a semi-slant $\xi^\perp-$ Riemannian submersion such that
$D_1=<\frac{\partial}{\partial x_{1}}+\frac{\partial}{\partial x_{2}},%
\frac{\partial}{\partial y_{1}}+\frac{\partial}{\partial y_{2}},
\frac{\partial}{\partial x_{3}}-\frac{\partial}{\partial x_{2}},
\frac{\partial}{\partial y_{3}}-\frac{\partial}{\partial y_{4}}>$ and
$D_2=<\frac{\partial}{\partial x_{5}}+\frac{\partial}{\partial x_{6}}, \frac{\partial}{\partial y_{6}}>$
with semi-slant angle $\alpha =\frac{\pi }{4}.$\\
\end{example}

\noindent{\bf{Acknowledgement}}\newline
This paper is supported by Bing\"{o}l University research project (BAP-FEF.2016.00.011).

\end{document}